\documentclass[10pt,reqno]{amsart}

\usepackage{amsmath}
\usepackage{amsfonts}
\usepackage{amssymb}
\usepackage{amsthm}
\usepackage{stmaryrd}

\usepackage{bm}

\usepackage{mathrsfs}
\usepackage{latexsym}
\usepackage{verbatim} 
\numberwithin{equation}{section}
\usepackage{enumitem}

\usepackage{graphicx}
\usepackage{subfigure}
\usepackage{epstopdf}

\usepackage{empheq}

\usepackage{ifthen} 

\provideboolean{shownotes} 
\setboolean{shownotes}{true} 
\newcommand{\margnote}[1]{
\ifthenelse{\boolean{shownotes}}%
{\marginpar{\raggedright\tiny\texttt{#1}}}%
{}%
}
\newcommand{\hole}[1]{
\ifthenelse{\boolean{shownotes}}%
{\begin{center} \fbox{ \rule {.25cm}{0cm}
\rule[-.1cm]{0cm}{.4cm} \parbox{.85\textwidth}{\begin{center}
\texttt{#1}\end{center}} \rule {.25cm}{0cm}}\end{center}}
{}
}

\theoremstyle{plain}

\newtheorem{lemma}{Lemma}[section]

\newtheorem{theo}[lemma]{Theorem}

\newtheorem{corollary}[lemma]{Corollary}

\theoremstyle{definition}

\newtheorem{remark}[lemma]{Remark}
\newtheorem{definition}[lemma]{Definition}

\theoremstyle{remark}

\usepackage{cite} 

\usepackage[colorlinks=true,urlcolor=blue,
citecolor=red,linkcolor=blue,linktocpage,pdfpagelabels,
bookmarksnumbered,bookmarksopen]{hyperref}

\usepackage{orcidlink}

\usepackage{cleveref}
\usepackage[pagewise,mathlines]{lineno}

\newcommand{\R}{{\mathbb R}}
\newcommand{\Ic}{{\mathbb I}}
\newcommand{\M}{\mathcal{M}}
\newcommand{\Z}{{\mathbb Z}}
\newcommand{\C}{{\mathbb C}}

\newcommand{\N}{{\mathbb N}}

\newcommand{\Id}{\mathbf{I}_d}

\newcommand{\Ido}{\mathbf{I}_2}

\newcommand{\cEw}{\mathscr{E}}

\newcommand{\cU}{{\mathcal{U}}}

\newcommand{\cJ}{{\mathcal{J}}}

 \newcommand{\llb}{\llbracket}
\newcommand{\rrb}{\rrbracket}
\newcommand{\eu}{\hat{\bm{e}}_1}
\newcommand{\ed}{\hat{\bm{e}}_2}

\newcommand{\Fp}{\underline{\bm{F}}^{+}}
\newcommand{\Fm}{\underline{\bm{F}}^{-}}
\newcommand{\bB}{\boldsymbol{B}}
\newcommand{\bN}{\boldsymbol{N}}
\newcommand{\bF}{\boldsymbol{F}}
\newcommand{\bbf}{\boldsymbol{f}}
\newcommand{\bn}{\boldsymbol{n}}
\newcommand{\bFs}{\boldsymbol{F}^{\pm}}
\newcommand{\bFu}{\underline{\boldsymbol{F}}^{\pm}}

\newcommand{\Cof}{\mathrm{Cof}\,}
\newcommand{\sgn}{\mathrm{sgn}\,}

\newcommand{\ii}{\mathrm{i}}

\renewcommand{\Re}{\mbox{\rm Re}\,}
\renewcommand{\Im}{\mbox{\rm Im}\,}

\newcommand{\Tr}{\mbox{\rm tr}\,}
\newcommand{\Div}{\mbox{\rm div}}

\newcommand{\ShowColoredChanges}{true} 
\ifthenelse{\isundefined{\ShowColoredChanges}}
  {

  }
  {

  }

\begin{document}

\title[Stability of phase boundaries for Hadamard hyperelastic materials]{Dynamical stability of planar phase boundaries for hyperelastic materials of Hadamard type}

\author[H. Freist\"{u}hler]{Heinrich Freist\"{u}hler \orcidlink{0000-0002-0741-886X}}

\address{{\rm (H. Freist\"{u}hler)} Department of Mathematics\\University of Konstanz\\78457 Konstanz (Germany)} 

\email{heinrich.freistuehler@uni-konstanz.de}

\author[L. Morales]{Lauro Morales \orcidlink{0000-0003-3294-755X}}

\address{{\rm (L. Morales)} Departamento de Matem\'aticas\\Universidad Aut\'onoma Metropolitana\\Unidad Iztapalapa\\Av. San Rafael Atlixco 186, Col. Vicentina, C.P. 09340\\Cd. de M\'{e}xico (Mexico)}

\email{lauro\_mm@xanum.uam.mx}

\author[R. G. Plaza]{Ram\'on G. Plaza \orcidlink{0000-0001-8293-0006}}

\address{{\rm (R. G. Plaza)} Departamento de Matem\'aticas y Mec\'anica\\Instituto de Investigaciones en Matem\'aticas Aplicadas y en Sistemas\\Universidad Nacional Aut\'onoma de M\'exico\\Circuito Escolar s/n, Ciudad Universitaria\\C.P. 04510 Cd. de M\'{e}xico (Mexico)}

\email{plaza@aries.iimas.unam.mx}

\author[F. Vallejo]{Fabio Vallejo \orcidlink{0009-0004-2580-7414}}

\address{{\rm (F. Vallejo)} Departamento de Matem\'aticas y Mec\'anica\\Instituto de Investigaciones en Matem\'aticas Aplicadas y en Sistemas\\Universidad Nacional Aut\'onoma de M\'exico\\Circuito Escolar s/n, Ciudad Universitaria\\C.P. 04510 Cd. de M\'{e}xico (Mexico)}

\email{fabioval@ciencias.unam.mx}

\begin{abstract}
The dynamical stability of laminates or planar phase boundaries for hyperelastic materials of Hadamard type in two space dimensions is studied. For that purpose, the stability function, known as the Lopatinski\u{\i} determinant, is computed for states of deformation at both sides of the planar interface that account for the generalized Legendre-Hadamard conditions derived by Grabovsky and Truskinovsky (J. Elast. 123 (2016), 225--243). The sufficient conditions for the dynamical stability of such configurations are described in terms of the physical parameters of the model, such as the shear modulus, and computed under kinetic conditions across the interface of both Maxwell (conservation of energy) or Abeyaratne and Knowles (dissipation of energy) types.
\end{abstract}

\keywords{Elastic phase boundaries, compressible Hadamard materials, multidimensional stability, Lopatinski\u{\i} determinant.}

\subjclass[2020]{74A50, 35Q74, 35B35, 74H55, 74N20.}

\maketitle
\setcounter{tocdepth}{1}



\section{Introduction}
\label{secintro}

The analysis of phase boundaries of shear bands in elastic materials is one of the fundamental problems in elasticity theory. There is a vast literature on the description, variational stability and emergence of phase transitions, interpreted as static configurations delimiting local minimizers of an energy functional that defines the material under consideration. For an abridged list of references, the reader is referred to \cite{GrTr16,GrTr19,Dac08,Siva88,BJ87,BJ92,Ba4}. The present work aims to contribute to the stability theory of such configurations from a \emph{dynamical} viewpoint and considers planar two-phase configurations for hyperelastic \emph{Hadamard materials}.

The term Hadamard material was coined by John \cite{Joh66} (based on an early description by Hadamard \cite{Hada1903}) to account for a large class of elastic media where the stored energy density function is of the form
\begin{equation}
\label{Hadamardenergy}
W(\bF) = \frac{\mu}{2} \Tr (\bF^\top \bF) + h(\det \bF),
\end{equation}
where $\bF \in \R^{d \times d}$, with $\det \bF >0$, denotes the deformation gradient, $\Tr(\cdot)$ denotes the trace function, $\mu > 0$ is the shear modulus and the scalar function $h = h(J)$, defined for $J = \det \bF \in (0,\infty)$, is known as the volumetric density function. In Eq. \eqref{Hadamardenergy} the trace term is the isochoric (or isovolumetric) part of the energy, quantifying energy changes at constant volume, whereas $h$ accounts for energy changes due to changes in volume. The simplest interpretation of an elastic Hadamard material is as a compressible extension of a neo-Hookean incompressible solid and energies of the form \eqref{Hadamardenergy} usually model martensitic solid phase transitions. Following Grabovsky and Truskinovsky \cite{GrTr16}, in this paper we specialize our analysis to the two-dimensional case, for which $d = 2$.

The concept of stability addressed in this paper is of dynamical nature. Looking at static weak solutions to the equations of elastodynamics, the main question that we pose is the following: given a multidimensional smooth perturbation (or a small wave impinging on the interface) of the static phase boundary, is there a local solution to the equations of elastodynamics with the same structure? In other words, will solutions to the dynamical system of equations, whose initial data are near to the planar interface, be close and similar to (or far and qualitatively different from) it? The problem was addressed for the first time by Freist\"uhler and Plaza \cite{FrP1,FrP3}, who set up the theoretical basis for a rigorous stability theory of such configurations. The main idea of the authors in \cite{FrP1} was to consider the static configurations as (non-classical) \emph{stationary undercompressive shock fronts} \cite{Fr3,Da4e,LeF02} for the equations of elastodynamics, which are of conservative nature. Therefore, Majda \cite{M1,M2} and M\'etivier's \cite{Me1,Me2,Me5} theory of stability of planar shock fronts can be adapted to the undercompressive case (cf. Freist\"uhler \cite{Fr3}) and, in turn, applied to the case of elastic phase boundaries. The article \cite{FrP1} contains the theoretical framework and the reduced stability conditions to perform such analysis.  As a result from their work, it is now known that the nonlinear stability of planar elastic phase configurations depends upon the algebraic \emph{Lopatinski\u{\i} conditions} for linear hyperbolic initial boundary value problems \cite{BS,Kre70,Lopa70,Lopa56} (also known as the \emph{hyperbolic Kreiss-Majda-Lopatinski\u{\i} conditions}). In a later contribution \cite{PL2}, these conditions were verified numerically in the case of a two-well hyperelastic energy density in three dimensions modeling martensite twins.

In this paper we establish for the first time the dynamical stability conditions for two phase planar elastic boundaries in the case of two-dimensional Hadamard materials. The main differences with the study in \cite{PL2} consist of, on one hand, the fundamental assumption that the deformation gradients defining the static configuration (or the energetic wells on both sides of the interface) \emph{are not local minimizers of the stored energy density function}. They are admissible, however, in the sense that classical phase coexistence conditions are satisfied, as well as the generalized Legendre-Hadamard conditions derived in \cite{GrTr16}, which account for stability in the variational framework. On the other hand, our result are theoretical as we rigorously establish the conditions for stability via a direct calculation of the Lopatinski\u{\i} stability function (in contrast, the results of \cite{PL2} are numerical). Up to our knowledge, ours is the first analytic proof of the dynamical stability of elastic phase boundaries. 

It is well-known that for shocks of undercompressive nature, a certain kinetic relation (or kinetic rule) is needed to determine the dynamics of the shock \cite{FrP1,Fr3}. This kinetic rule induces a jump relation of Rankine-Hugoniot type across the interface. In this paper we consider kinetic rules as those proposed by Abeyaratne and Knowles \cite{AK90,AK91}, prescribed as constitutive relations which, in the most basic scenario, relate the normal velocity of the discontinuity with the driving force across the interface. The simplest of those kinetic relations is known as the Maxwell kinetic rule, which is tantamount to conservation of energy across the interface. Our results apply to phase transitions under both the Maxwell rule and kinetic relations of Abeyaratne and Knowles type expressing energy dissipation at the interface.

\subsection*{Plan of the paper}

This paper is laid out as follows. In Section \ref{secmodel} we describe the equations of elastodynamics as a first order system of conservation laws, for which elastic phase boundaries arise as weak solutions. In Section \ref{secelastipb} we describe the static configurations that model laminates or two-phase interfaces and the kinetic relation that determines their dynamics. In particular, in Section \ref{secstabtheory} we briefly recall the stability theory from \cite{FrP1}. Section \ref{sechada} is devoted to the description of these configurations in the case of two-dimensional Hadamard materials, following the analysis of Grabovsky and Truskinovsky \cite{GrTr16}. Sections \ref{RedLop1} and \ref{seclopa} are devoted to computing the Lopatinski\u{\i} stability function and to stating our main results; see Theorems \ref{MaxSta} (under conservation of energy or kinetic relation of Maxwell type) and \ref{AbeySta} (under kinetic rules of Abeyaratne and Knowles type which dissipate energy) below. Section \ref{secdisc} contains a brief discussion of the results and some final remarks. For convenience of the reader, we have included in Appendix \ref{GrabTro} a description of the generalized Legendre-Hadamard conditions derived by Grabovsky and Truskinovsky \cite{GrTr16}, as well as the calculations of such conditions for Hadamard materials in Appendix \ref{L-Hgt}.

\subsection*{On notation}

The unit imaginary number is denoted by $\mathrm{i} \in \C$, $\ii^2 = -1$, and $i,j \in \Z$ indicate integer indices. For a complex number $\lambda$, we denote complex conjugation by ${\lambda}^*$ and its real and imaginary parts by $\Re \lambda$ and $\Im \lambda$, respectively.  Real matrices are denoted by capital roman font letters (e.g. $\bm{A} \in \R^{d \times d}$), except for the first Piola-Kirchhoff stress tensor, denoted by $\bm{\sigma}$. Complex matrix fields in the space of frequencies will be denoted with calligraphic letters (e.g., $\mathcal{A} \in \C^{n \times n}$). Complex transposition of block matrices are indicated by the symbol ${}^*$ (e.g., $\mathcal{A}^*$), whereas simple transposition is denoted by the symbol $\bm{A}^\top$. $\{ \hat{\bm{e}}_j \}_{j=1}^d$ denotes the canonical basis of $\R^d$ and $\Id$ denotes the identity $d \times d$ matrix, for each $d \in \N$, $d \geq 2$. $\R_+^{d\times d}$ is the set of all real $d \times d$ matrices with positive determinant. In this paper, the elements of a real matrix $\bm{A}$ will be denoted as $A_{ij}$ and $\bm{A}_j$ will denote the $j$-th column vector. We denote the cofactor matrix of any real square matrix $\bm{A} \in \R^{d\times d}$ to be $(\Cof \bm{A})_{ij} = (-1)^{i+j} \det (\bm{A}'_{(i,j)})$, $1 \leq i,j \leq d$, where $\bm{A}'_{(i,j)}$ is the $(d-1) \times (d-1)$ matrix obtained by deleting the $i$-th row and the $j$-th column of $\bm{A}$. Hence,
\begin{equation}
\label{exprcof}
(\Cof \bm{A})^\top \bm{A} = \bm{A} (\Cof \bm{A})^\top = (\det \bm{A}) \Id.
\end{equation}For any $\bm{a}, \bm{b} \in \R^d$, $\bm{a} \otimes \bm{b} \in \R^{d \times d}$ denotes the standard tensor product matrix whose $(i,j)$-entry is $a_i b_j$, whereas $\bm{a}\cdot\bm{b}=\bm{a}^{\top}\bm{b}\in\R$ denotes the usual dot product in $\R^{d}$. For any (scalar or matrix valued) function $g$ of the state variables $\bm{u}$, the jump across a shock discontinuity will be denoted as $\llb g(\bm{u}) \rrb := g(\bm{u}^+) - g(\bm{u}^-)$, whereas its mean across the interface will be indicated by $\langle g \rangle := \tfrac{1}{2} (g(\bm{u}^+) + g(\bm{u}^-))$.

\section{Elastodynamics}
\label{secmodel}

\subsection{The equations of motion}
\label{model1}

In the theory of elastodynamics, any deformation of an elastic material sample or reference configuration $\Omega\subset \R^d$ (typically, $d=1,2$ or $3$) is described by a function or displacement field, $\bm{y}:\Omega\times [0,\infty)\to\R^d$, in such a way that a sample point located at $\bm{x}\in \Omega$ at $t=0$ evolves to the point $\bm{y}(\bm{x},t)$ at time $t>0$. For every deformation $\bm{y}:\Omega\times [0,\infty)\to\R^d$ regular enough, we define the deformation gradient field, $\bF:\Omega\times [0,\infty)\to \R^{d\times d}$, and  the local velocity vector, $\bm{v}:\Omega\times [0,\infty)\to \R^d$, as
\[
\bF(\bm{x},t)=\nabla_{x}  \, \bm{y}\in\R^{d\times d},\quad \text{and}\quad \bm{v}(\bm{x},t)=\bm{y}_t\in\R^d,
\]
respectively, or equivalently as
\begin{equation}
\label{Matder1}
F_{ij}=\dfrac{\partial y_i}{\partial x_j},\quad\text{and}\quad v_i =\dfrac{\partial y_i}{\partial t},
\end{equation}
for all $i,j=1,\cdots,d$. We say that the elastic sample is \textit{hyperelastic} if there exists a single stored energy density function $W:\R_+^{d\times d}\to [0,\infty)$, defined per unit volume in the reference configuration, from which the stress-strain relation, or constitutive relation, can be derived. This is given by the first Piola-Kirchhoff stress tensor, $\bm{\bm{\sigma}}=\bm{\bm{\sigma}}(\bF)$, which derives from $W$ as    
\[
\bm{\sigma} (\bF):=\frac{\partial W}{\partial \bF},
\]
or component-wise as 
\[
\sigma_{ij} (\bF):=\frac{\partial W}{\partial F_{ij}},\quad 1\leq i,\: j\leq d.
\]
Fundamental assumptions on the strain-energy function $W$ include the principle of frame-indifference, material symmetry (isotropy), and objectivity; see, for example, Ogden \cite{Og84}. 

In the absence of external forces, Newton's law readily implies the well-known non-thermal elastodynamics system 
\begin{equation}
\label{model0}
\begin{array}{r}
\bm{y}_{tt} - \Div_{x}  \, \bm{\sigma}(\bF) = 0,
\end{array}
\end{equation}
which can be reformulated as a first-order system of conservation laws in terms of the deformation gradient $\bF$ and the velocity $\bm{v}$ as follows, 
\begin{equation}
\label{model}
\begin{array}{r}
\bF_t - \nabla_{x}  \bm{v} = 0, \\
\bm{v}_t - \Div_{x}  \, \bm{\sigma}(\bF) = 0,
\end{array}
\end{equation}
or component-wise as 
\begin{equation}
\label{modelij}
\begin{aligned}
\partial_t F_{ij} - \partial_{x_j} v_i &= 0, & \quad i,j = 1, \ldots, d,\\
\partial_t v_i - \sum_{j=1}^d \partial_{x_j} \Big( \frac{\partial W}{\partial F_{ij}} \Big) &= 0, &\quad i = 1, \ldots, d,
\end{aligned}
\end{equation}
with the constraint 
\begin{equation}
\label{curlfree}
\hbox{curl}_x \, \bF = 0,
\end{equation}
which is known as the curl-free constraint and it is simply a short-cut for the compatibility equations 
\begin{equation}
\label{curlfreeij}
\partial_{x_k} F_{ij} = \partial_{x_j} F_{ik}, \qquad i,j,k = 1,\cdots, d.
\end{equation}
The latter are clearly a consequence of Eq. \eqref{Matder1}. Following the notation in \cite{FrP1}, we write the  $j$-th column of $\bF$ as $\bF_j$ and the  $j$-th column of $\bm{\sigma}(\bF)$ as ${\bm{\sigma}(\bF)}_j$. That is
\[
\bF_j :=\begin{pmatrix}
    F_{1j} \\ \vdots \\ F_{dj} 
\end{pmatrix}\in \R^{d}, \quad \mbox{and}\quad {\bm{\sigma}(\bF)}_j:=\begin{pmatrix}
 \frac{\partial W}{\partial F_{1j}} \\ \vdots \\ \frac{\partial W}{\partial F_{dj}}
\end{pmatrix}\in \R^{d},\quad j=1\cdots d.
\]
With a slight abuse of notation, if we consider $\bF$ as the column vector $\bF=(\bF_1^{\top},\cdots,\bF_d^{\top})^{\top}\in\R^{d^2}$ then system \eqref{model} can be recast as a system of $n = d^2 + d$ conservations laws of the form (cf. \cite{PlVa22,FrP1}),
\begin{equation}
\label{HSCL}
\bm{u}_t + \sum_{j=1}^d \bbf^j(\bm{u})_{x_j} = 0,
\end{equation}
where 
\begin{equation}
\label{ufus}
\bm{u} = \begin{pmatrix} \bF_1 \\ \vdots \\ \bF_d \\ \bm{v}
\end{pmatrix} \in \R^{d^2 + d}, \qquad \bbf^j(\bm{u}) = - \begin{pmatrix} 0 \\ \vdots \\ \bm{v} \\ \vdots \\ 0 \\ \bm{\sigma}(\bF)_j \end{pmatrix} \in \R^{d^2 + d},
\end{equation}
denote the conserved quantities $\bm{u}$ belonging to an admissible set $\mathcal{U} \subset \R^n$ of state variables, and the nonlinear fluxes $\bbf^j : \cU \to \R^n$, $\bbf^j \in C^2(\cU; \R^n)$, $1 \leq j \leq d$, respectively. In the expression for $\bbf^j $ in \eqref{ufus}, the column vector $\bm{v}$ is located in the $j$-th $d\times 1$ block from the top.
%

From physical considerations, it is usual to assume that $\det \bF>0$ (cf. Ciarlet \cite{Ci}). Thus, the open and connected set of admissible states is 
\[
\cU = \{ (\bF,\bm{v}) \in \R^{d \times d} \times \R^d \, : \, \det \bF > 0 \}. 
\]
Under this notation, the Jacobians 
$\bm{A}^j(\bm{u}) := D\bbf^j(\bm{u}) \in \R^{n \times n}$ are given by
\[
\bm{A}^j(\bm{u}) = - \begin{pmatrix} & & & 0 \\ & & & \vdots  \\ & 0 & & \Id \\ & & & \vdots  \\ & & & 0 \\ \bB_1^j(\bF) & \cdots & \bB_d^j(\bF) & 0  \end{pmatrix} \in \R^{(d^2 + d) \times (d^2 + d)},
\]
for all $j = 1, \ldots, d$, where the matrix fields $\bB_i^j$, for each $i,j=1,\cdots, d$, denote the second derivatives of the energy $W$ and they are defined by $\bB_i^j(\bF):=\partial\bm{\sigma}_j/\partial \bF_i$; more precisely, 
\begin{equation}
\label{defBij}
\bB_i^j(\bF) := \frac{\partial \bm{\sigma}_j}{\partial \bF_i} = \begin{pmatrix} W_{F_{1j}F_{1i}} & \cdots & W_{F_{1j}F_{di}} \\ \vdots & & \vdots \\ W_{F_{dj} F_{1i}} & \cdots & W_{F_{dj} F_{di}}\end{pmatrix} \in \R^{d \times d}
\end{equation}
(see \cite{FrP1} for details). By the previous relations, it follows that $(\bB_i^j)^{\top}=\bB_j^i$ for all $i,j=1,\cdots, d$.  

The hyperbolicity of system \eqref{HSCL} is a necessary condition for the well-posedness of the Cauchy problem \cite{Da4e}. For the system under consideration, the hyperbolicity depends on the stored energy density function $W$ and it is linked to the well known Legendre-Hadamard condition \cite{FrP1}. The latter is given in terms of the \textit{acoustic tensor},  defined as
\begin{equation}
\label{defacouten}
\bN(\bm{\xi},\bF) := \sum_{i,j=1}^d \xi_i \xi_j \bB_i^j(\bF) \in \R^{d \times d},
\end{equation} 
for all $\bF\in\R^{d\times d}$ and $\bm{\xi}\in\R^d$. The Legendre-Hadamard condition asserts that the acoustic tensor is positive definite for all non zero frequencies $\bm{\xi}\in\R^{d}$. More precisely, 
\begin{equation}
\label{LHcond0}
\bm{\eta}^{\top} \bN(\bm{\xi},\bF) \bm{\eta} > 0, \quad \text{for all } \; \bm{\xi}, \bm{\eta} \in \R^d \backslash \{0\}.
\end{equation}
This condition guarantees the convexity of $W$ along rank-one lines passing through $\bF$ and, in such a case, it is customary to say that $W$ is rank-one convex at $\bF$. This property is an essential tool for delimiting stability boundaries for weak local minima in the Calculus of Variations \cite{GiaHil96vI}. Another essential assumption for the analysis is the so called  \textit{constant multiplicity} assumption of M\'etivier \cite{Me4,FrP1}. Specifically, this property  holds for the stored energy density function $W$ at  $\bF$ if, for all nonzero frequencies $\bm{\xi}\in\R^d$, the eigenvalues of the acoustic tensor  $\bN = \bN(\bm{\xi},\bF)$ have multiplicities which remain constant with respect to $\bm{\xi}$ and $\bF$. 

\subsection{Planar laminates as weak solutions to the equations of elastodynamics} 

If we neglect boundary conditions ($\Omega=\R^d$), it follows that, for any fixed $\bF_0\in\R^{d\times d}$, there holds that constant tensor function  $(\bF,\bm{v})=(\bF_0,\bm{0})$ is an stationary weak solution to \eqref{model} with initial data $\bF(\bm{x},0)=\bF_0$. In terms of deformations, this constant state corresponds to the steady affine deformation
\[
\bm{y}(\bm{x},t) = \bF_0\bm{x}, \qquad \mbox{for all}\:\: \bm{x}\in\R^d.
\]
This observation allows to construct piecewise affine functions in $\R^d$. Indeed, if $\bF^+$ and $\bF^{-}$ are two constant matrices in $\R^{d\times d}$ such that there exist vectors $\bm{a},\bn\in\R^d$,  with $\bm{a}\neq0$ and $|\bn|=1$ satisfying 
\begin{equation}
\label{eq:rank-one}
\bF^{+}-\bF^{-}=\bm{a}\otimes\bn,
\end{equation}
then the steady deformation 
\begin{equation} 
\label{eq:twin}
\bm{y}(\bm{x},t):=\bm{y}_0 +\begin{cases} \bF^-\bm{x},  &  \bn\cdot \bm{x}< 0,\\ \bF^+\bm{x},  & \bn\cdot \bm{x} > 0,
\end{cases},
\qquad  \bm{x} \in \R^d,
\end{equation}
is  continuous with $\nabla_x \bm{y}\in \{\bF^-,\bF^+\}$ a.e. in $\R^d$. The surface of gradient discontinuity is given by $\{x\cdot \bn=0\}$\footnote{When $\bF^+$, $\bF^-$ are local minima of the stored energy density function $W$, corresponding to a phase transition between two martensitic phases, the steady deformation \eqref{eq:twin} is called a martensite twin \cite{PL2}.}. In terms of the variables $(\bF,\bm{v})$, \eqref{eq:twin} becomes
\begin{equation}
 \label{shock}
(\bF_*,\bm{v}_*)(\bm{x},t):= \begin{cases} (\bF^{-},0),  &  \bn\cdot \bm{x} < 0,\\ (\bF^+,0),  & \bn\cdot \bm{x} > 0,
\end{cases} \quad (\bm{x},t)\in\R^d\times [0,\infty),
\end{equation}
and it is a time-independent (static) weak solution to the system of conservation laws \eqref{model}-\eqref{curlfree}. The relation \eqref{eq:rank-one} is known as the \textit{Hadamard condition} and in this case it is customary to say that $\bF^-$ and $\bF^+$ are rank-one connected. 

\section{Elastic phase boundaries}
\label{secelastipb}

\subsection{Jump conditions}

Suppose the energy density function $W$ is rank-one convex at two constant states $\bF=\bFs\in\R_+^{d\times d}$. This implies that the system \eqref{model} is hyperbolic in open neighborhoods of $\bF=\bFs$, an essential prerequisite already for the stability of the individual phases (see \cite{FrP1}). A function of the form    
\begin{equation}
 \label{shock1}
(\bF,\bm{v})(\bm{x},t) = \begin{cases} (\bF^-,\bm{v}^-),  & \bm{x}\cdot \bn - st<0,\\ (\bF^+,\bm{v}^+),  & \bm{x} \cdot \bn - st>0,
\end{cases}
\end{equation}
represents a \textit{moving} phase boundary that travels in the constant direction $\bn\in\R^d$, $|\bn|=1$, with speed $s\in\R$. For short, \eqref{shock1} is denoted as 
\[
((\bF^-,\bm{v}^{-}),(\bF^+,\bm{v}^{+}),s,\bn\big)=(\bFs,\bm{v}^{\pm},s,\bn).
\]
The configuration \eqref{shock}, for example, corresponds to a \emph{static} phase boundary of the form $(\bFs,\bm{0},0,\bn)$.  If we assume that the phase boundary \eqref{shock1} is a weak solution to system \eqref{model} then the canonical Rankine-Hugoniot jump conditions associated with Eq. \eqref{model} must be satisfied. The latter have the form (cf. \cite{FrP1}),
\begin{equation}
\label{RHe1}
\begin{aligned}
-s \llb \bF \rrb - \llb \bm{v} \rrb\otimes \bn &= 0, \\
-s \llb \bm{v} \rrb - \llb \bm{\sigma}(\bF) \rrb\bn &= 0,\\
\llb \bF\rrb\times \bn &= 0,
\end{aligned}
\end{equation}
where $\llb g\rrb=g^+-g^-$ denotes the jump of any tensor or scalar field $g$.  The third jump condition above is associated with the curl-free constraint \eqref{curlfree}.
\begin{remark}
\label{crosrel}
In the three dimensional case ($d=3$), $\llb \bF\rrb\times \bn$ corresponds to the matrix whose $k$-th row is defined as the standard cross product in $\R^3$ between the $k$-th row of $\llb \bF\rrb$ with $\bn$. In dimension $d=2$, it is defined as
\[
\llb \bF\rrb\times \bn:=-\llb \bF\rrb\bn^{\perp},
\]
where $\bn^{\perp}:=(-n_2,n_1)^{\top}$ denotes the normal vector to $\bn=(n_1,n_2)$. This last definition is consistent with that of the three-dimensional case, as the above expression is recovered from $\llb \bF\rrb\times \bn$ in dimension $d=3$, by setting the third component of $\bn$ to zero.
\end{remark}

We are particularly interested in \textit{subsonic} phase boundaries, that is, moving phase boundaries whose traveling speed $s$ satisfies
\begin{equation}
\label{subSp1}
0\leq s^2<\min\{\text{eigenvalues of}\:\:\bN(\bn,\bFs)\}.
\end{equation}
Under the above assumptions and in the context of multidimensional stability of shock waves \cite{FrP1,Cou03}, the phase boundary $(\bFs,\bm{0},0,\bn)$ corresponds to an undercompressive shock \cite{Fr95}, as the sum of the number of outgoing characteristics at both  sides of the shock equals the dimension of the state space $n=d^2+d$ (see \cite{Fr3,FrP1}).

\subsection{Kinetic rules}
Given $(\bFs,\bm{v}^{\pm})$ and $\bn$, the speed $s$ of the associated subsonic phase boundary $(\bFs,\bm{v}^{\pm},s,\bn)$ is not uniquely determined by Eqs. \eqref{model} and \eqref{RHe1}. Thus, further jump conditions, relating the states on either side of the moving boundary with its space-time normal $(\bn,s)$, need to be added to circumvent the problem of non-uniqueness. These extra conditions are known as \textit{kinetic rules} and, in general, they have the following functional form  
\begin{equation}\label{King0}
g\big((\bF^-,\bm{v}^{-}),(\bF^+,\bm{v}^{+}),s,\bn\big)=0,
\end{equation}
where $g$ is, at least, a differentiable real valued function. In real elastic media, the precise circumstances under which the motion of phase boundaries is captured by a kinetic rule, as in \eqref{King0}, seem currently not clear from the literature. In this work we proceed as in \cite{PL2}, assuming such circumstances and considering a kinetic rule defined by $g$ of the form
\begin{equation}\label{King1}
g=\mathscr{F}+\tilde{g},
\end{equation}
where
\begin{equation}\label{King2}
\mathscr{F}=\llb W \rrb - \bn^{\top}\llb \bF\rrb^\top\langle \bm{\sigma} (\bF)\rangle\bn,
\end{equation}
is often called \textit{the driving traction} (or driving force) across the boundary, and $\tilde{g}$ is a differentiable function. The special case when
\begin{equation}\label{MaxRu}
\tilde{g}\equiv0,
\end{equation}
is known as the \emph{Maxwell (or Hugoniot) rule}. This kinetic rule corresponds to the conservation of energy across the interface \cite{AK90}. Based on the principle of thermodynamic irreversibility,  Abeyaratne and Knowles \cite{AK90,AK91} introduced a class of kinetic rules by requiring $\tilde{g}$ to meet the following conditions:
\begin{subequations}\label{AbeKR}
\begin{align}
&\tilde{g}\quad\text{is a differentiable function of its parameters,}\\
&\tilde{g}\big((\cdot,\cdot),(\cdot,\cdot),0,\cdot \big)=0\quad \text{identically,}\label{AbeKR02}\\
&\tilde{g}>0\quad\text{for}\quad s<0,\quad \tilde{g}<0\quad\text{for}\quad s>0,\\
&(D_s \tilde{g})\Big|_{\big((\bF^-,\bm{0}),(\bF^+,\bm{0}),0,\bn\big)}<0.
\end{align}
\end{subequations}

The aim of this paper is to study the dynamical stability, as solutions to \eqref{HSCL}, of a family of \textit{static} ($s=0$)  phase boundaries of the form \eqref{shock} under small dynamical perturbations. The analysis is carried out under the assumptions of either the Maxwell or the Abeyaratne-Knowles kinetic rules.  

The Rankine-Hugoniot conditions and the kinetic Maxwell rule  $\mathscr{\bF}=0$, are compatible with the classical Weierstrass-Erdmann conditions, which are widely used in the modeling of phase coexistence in the framework of nonlinear elasticity \cite{Erd1877}. Indeed,  observe that if the (static) phase boundary  $(\bF^{\pm},\bm{0},0,\bn)$ satisfies the Rankine-Hugoniot conditions \eqref{RHe1} then the second equation in \eqref{RHe1} simplifies to 
\begin{equation}\label{halfgra1}
\llb \bm{\sigma}(\bF) \rrb\bn = 0,
\end{equation}
because $s=0$. This is known as the first Weierstrass-Erdmann condition for two rank one connected states $\bF^{\pm}$. On the other hand, keeping in mind that for two vectors $\bm{v}$, $\bm{w}$ in $\R^d$, $\bm{v}\otimes\bm{w}:=\bm{v}\bm{w}^{\top}$ and $\Tr(\bm{v}\otimes\bm{w})=\bm{v}\cdot\bm{w}=\bm{v}^{\top}\bm{w}$, one verifies that the Maxwell rule, $\mathscr{\bF}=0$, can be written as
\begin{equation}\label{MaxWei}
\begin{aligned}
0=\llb W \rrb - \bn^{\top}\llb \bF\rrb^\top\langle \bm{\sigma} (\bF)\rangle\bn&=\llb W \rrb - \bn^{\top}\bn\bm{a}^{\top}\langle \bm{\sigma} (\bF)\rangle\bn\\
&=\llb W \rrb - |\bn|^2\Tr\big(\bm{a}\otimes\langle \bm{\sigma} (\bF)\rangle\bn\big),\\
&=\llb W \rrb - \Tr\big((\bm{a}\otimes\bn)\langle \bm{\sigma} (\bF)\rangle^{\top}\big),\\
&=\llb W \rrb - \langle \bm{\sigma} (\bF)\rangle:\llb \bF \rrb,
\end{aligned}
\end{equation}
where the symbol ``$:$'' denotes the Frobenius inner product of two matrices, $\bF_1:\bF_2:=\Tr(\bF_1\bF_2^{\top})$. The last equation above is known as the second Weierstrass-Erdmann condition.  An additional jump condition was derived in \cite{GrTr11} by making use of the Weierstrass convexity condition \eqref{Weconv}. It is given by 
\begin{equation}
\label{halfgra2}
\llb \bm{\sigma}(\bF) \rrb^{\top}\bm{a} = 0.
\end{equation}
Even though the above condition is also satisfied for the configuration we will analyze, it plays no role in the dynamical stability analysis. 
\begin{remark}
\label{ReGT2}
The study of necessary conditions for identifying singular minimizers that exhibit surfaces of gradient discontinuity (e.g., \eqref{eq:twin}) is one of the main goals in the framework of the Calculus of Variations.  Equations \eqref{halfgra1}, \eqref{MaxWei} and \eqref{halfgra2} are widely known as necessary conditions on the surface of jump discontinuity of the gradient of any strong local minimizer of the energy defined in \eqref{elasen1} (see Appendix \ref{GrabTro}; cf. \cite{GrTr19}). It is worth mentioning that these equations, together with \eqref{eq:rank-one},  define a codimension one variety in $\R_{+}^{2\times 2}$  called the jump set $\cJ$ (see Eq. \eqref{Jump3}; cf. \cite{GrTr16,GrTr19}). It describes the set of homogeneous deformation gradients $\bF$ that permit energy-neutral  nucleation of layers of a new phase with a compatible deformation gradient $\bF + \bm{a}\otimes\bn$ (see \cite{GrTr19}). In this context, all (static) phase boundaries satisfying the Rankine-Hugoniot conditions \eqref{RHe1}, the Maxwell rule and Eq. \eqref{halfgra2} lie within the jump set. The notion of (variational) stability, characterized  by quasiconvexity \eqref{Bin1}, constitutes an additional necessary condition for strong local minimizers \cite{Tah02}. Roughly speaking, a given deformation is stable if  the energy cannot be lowered by any admissible variation of the deformation. In fact, the main idea behind the method developed in \cite{GrTr16} to derive explicit necessary conditions for (static) phase boundaries \eqref{eq:twin} to be strong local minimizers, is to identify a stable part of the jump set;  namely, gradients $\bF$ within the jump set for which quasiconvexity holds. Since arbitrary small neighborhoods of these gradients  contain points where quasiconvexity fails to hold, it follows that such gradients  lie on the so called elastic \textit{binodal} \cite{GrTr19b}, defined as the boundary of the region of failure of quasiconvexity (see Appendix \ref{Bin1}). The binodal is of fundamental importance in nonlinear elasticity \cite{GrTr13,GrTr19b}; it is, however, difficult to compute explicitly because of the lack of an algebraic characterization for quasiconvexity. The method proposed by Grabovsky and Truskinovsky \cite{GrTr16,GrTr19b,GrTr19}, described above, overcomes this practical limitation by characterizing a portion of the binodal as a stable subset of the jump set.  For completeness, Appendix \ref{GrabTro} contains a summary of the  conditions for phase coexistence associated with the jump set, whereas Appendix \ref{L-Hgt} provides the Legendre-Hadamard conditions for Hadamard materials that characterize the stable part of the jump set as derived in \cite{GrTr16}. It is worth noting that, in a subsequent work \cite{GrTr19}, Grabovsky and Truskinovsky further establish that for Hadamard materials with large enough shear modulus, the jump set delivers the entire binodal. This result implies that all nontrivial configurations minimizing the associated energy functional are simple laminates.
\end{remark}

\subsection{Stability of subsonic elastic phase boundaries}
\label{secstabtheory}

Consider the subsonic phase boundary \eqref{shock1} (weak solution to \eqref{model}), for which jump conditions \eqref{RHe1} together with an already prescribed regular kinetic jump condition of the form \eqref{King0}, hold. In addition to the rank-one convexity condition for $W$ at  $\bF=\bFs$, we also assume that the constant multiplicity assumption of M\'etivier \cite{Me4} is satisfied (see Section \ref{model1}). These hypotheses allow the dynamical stability analysis of Freist\"{u}hler and Plaza \cite{FrP1} to be applied to any subsonic phase boundary of the form $(\bFs,\bm{v}^{\pm},s,\bn)$, interpreted as an undercompressive shock.

The dynamical stability analysis of \eqref{shock1} involves a Fourier-Laplace decomposition of the constant-coefficient linearized problem associated with \eqref{HSCL} at the configuration \eqref{shock1} (see \cite{BS} for details). By considering single normal modes of the form $\tilde{u} \sim e^{\lambda t} e^{\ii \xi \cdot \bm{x}}$ with spatio-temporal frequencies lying in the set
\begin{equation}
\label{genfreq}
\Gamma_{\bn}^+ = \left\{ (\lambda,\bm{\xi}) \in \C \times \R^d \, : \, \Re \lambda > 0, \;\, \bm{\xi} \cdot \bn = 0, \,\; |\lambda|^2 + |\bm{\xi}|^2 = 1 \right\},
\end{equation}
as solutions to the linearized problem around the phase boundary $(\bFs,\bm{v}^{\pm},s,\bn)$ (see \cite{FrP1}), one arrives at the \emph{Lopatinski\u{\i} determinant} or stability function
\[
\Delta:\Gamma_{\bn}^+\to\C,
\]
\begin{equation}
\label{gLopdet}
\Delta(\lambda,\bm{\xi}) = \det \begin{pmatrix}
\widetilde{\mathcal{R}}^s(\bF^{-}) & \widehat{\mathcal{Q}} & \widetilde{\mathcal{R}}^{u}(\bF^+) \\
\widetilde{p}^- & \widehat{q} & \widetilde{p}^+
\end{pmatrix},
\end{equation}
where
\[
\widehat{\mathcal{Q}}(\lambda,\bm{\xi}) = 
\begin{pmatrix}
\llb \bF\rrb \bn\vspace{0.2cm}\\
-\lambda s\llb \bF\rrb \bn-\ii\llb\bm{\sigma}(\bF)\rrb\xi
\end{pmatrix}
\in \mathbb{C}^{2d \times 1},
\]
\[
\widehat{q}(\lambda,\bm{\xi}) = -\lambda (D_s g) + \ii( \bm{\xi} \cdot D_{\bn} g) \in \C^{1 \times 1},
\]
\[
\widetilde{p}^-(\lambda,\bm{\xi}) = -(D_{(\bF^-, \bm{v}^-)} g) \mathcal{K}_{s,\bn} (\bF^-) \widetilde{\mathcal{R}}^{s}(\bF^-) \in \C^{1 \times d},
\]
\[
\widetilde{p}^+(\lambda,\bm{\xi}) = (D_{(\bF^+, \bm{v}^+)} g) \mathcal{K}_{s,\bn}(\bF^+)  \widetilde{\mathcal{R}}^{u}(\bF^+) \in \C^{1 \times d}.
\]
The function $\Delta$ is jointly analytic in $(\lambda,\bm{\xi}) \in \Gamma_{\bn}^+$ and homogeneous of degree one.  Also, by continuity of the eigenprojections, the Lopatinski\u{\i} determinant can be defined for all frequencies within the set 
\[
\Gamma_{\bn} := \left\{ (\lambda,\bm{\xi}) \in \C \times \R^d \, : \,\; \Re \lambda \geq 0, \,\; \bm{\xi} \cdot \bn = 0, \,\; |\lambda|^2 + |\xi|^2 = 1 \right\},
\]
(see \cite{Kre70,M2,M1,Me2} for further information). Here $\widetilde{\mathcal{R}}^{s,u}(\bF) \in \C^{1 \times d}$ represent the right stable and unstable invariant spaces of a matrix field $\M_{s,\bn}(\bF):\Gamma_{\bn}\to\C^{2d\times 2d}$  for each $\bF$ near $\bFs$, and $\mathcal{K}_{s,\bn}(\bF):\Gamma_{\bn}\to\C^{d^2+d\times 2d}$ denote continuous mappings.  $\M_{s,\bn}$ and $\mathcal{K}_{s,\bn}$ are given by explicit formulae in terms of the first and second derivatives of the energy function $W$ (for their precise form in the case of a static interface, see Section \ref{RedLop1}; for the general case of a dynamic phase boundary, the reader is referred to \cite{FrP1}). In Eq. \eqref{gLopdet}, $\widehat{\mathcal{Q}}$ is the ``jump vector" associated to Rankine-Hugoniot conditions \cite{M1}, and $\widehat{q}$ denotes its kinetic counterpart \cite{Fr3}. All elements in Eq. \eqref{gLopdet} are evaluated at the end states $\bF=\bFs$ together with $\bm{v}^{\pm}$, $s$, $\bn$ and depend continuously on its parameters for all subsonic $s$ including $s=0$. Furthermore, as a function of $(\lambda,\bm{\xi})$, $\Delta$ is analytic in $\Gamma^+_{\bn}$  and continuous in the whole set $\Gamma_{\bn}$. 

The stability function or \emph{Lopatinski\u{\i} determinant} $\Delta$ determines the solvability of the linearized problem around $(\bFs,\bm{v}^{\pm},s,\bn)$ by wave solutions that violate an $L^2$ well-posedness estimate. A zero of $\Delta$ implies the existence of spatially decaying solutions with temporal growth rate $\exp(t\:\Re\lambda)$. Thus, a necessary condition for well-posedness of the linearized problem is that $\Delta$ does not vanish in the open set $\Gamma_{\bn}^+$. In this case, we say that $\Delta$ satisfy the \emph{weak Lopatinski\u{\i} condition} and  $(\bFs,\bm{v}^{\pm},s,\bn)$ is termed \textit{weakly stable}. A stronger condition, known as the \emph{uniform Lopatinski\u{\i} condition}, requires $\Delta$ not to vanish in the whole frequency set $\Gamma_{\bn}$ (allowing time frequencies with $\Re \lambda = 0$) and it is a sufficient condition for the well-posedness of the nonlinear system, as the analyses of Majda \cite{M2,M1} and M\'etivier \cite{Me1,Me2} show. In this case, the phase boundary $(\bFs,\bm{v}^{\pm},s,\bn)$ is called \emph{uniformly stable}. Finally, if $\Delta$ has a zero $(\lambda,\bm{\xi})$ in $\Gamma_{\bn}^+$ (with $\Re \lambda > 0$) the phase boundary is referred to as \emph{strongly unstable}. The theory is thus applicable to static phase boundaries of the form $(\bFs,\bm{0},0,\bn)$ under multidimensional perturbations that depend on time.

\section{Static phase boundaries for compressible Hadamard materials}
\label{sechada}

This section describes a family of phase boundaries \eqref{shock} for compressible materials of Hadamard type (cf. \cite{Hay68,Joh66}), characterized by stored-energy functions of the form
\begin{equation}
\label{Hadamardmat0}
W(\bF) = \frac{\mu}{2} \Tr (\bF^\top \bF) + h(\det \bF).
\end{equation}
As stated above, $\Tr(\cdot)$ denotes the trace function, $\mu > 0$ is a positive constant known as the shear modulus and $h = h(J)$, for $J = \det \bF \in (0,\infty)$ is a real valued function of class $C^2$. $h$ is known as the volumetric density function and accounts for energy changes due to changes in volume. From its definition, one may verify that function \eqref{Hadamardmat0} satisfies the principles of frame indifference, material symmetry and objectivity. For a discussion on the physical model and its main properties, see \cite{PlVa22}. Following Grabovsky and Truskinovsky \cite{GrTr16}, we concretely take
 \begin{equation}
 \label{hdef1}
h(J)=\frac{1}{4J_0}\Big((J_0-J)^2-1\Big)^2,
\end{equation} 
where $J_0>1$. The failure of quasiconvexity of $W$ follows from the non-convex behavior of  $h$ \cite{GrTr19}, which attains its minima at the points $J=J_0\pm1$. Concretely, $h$ exhibits the double-well shape property \cite{GrTr16} whenever $J_0>1$\footnote{Formula \eqref{hdef1} is only required to hold within an arbitrary neighborhood of $[J_0-1,J_0+1]$. $h(J)$ can be modified outside this neighborhood (as long as it agrees with its convex hull $h^{**}$ there \cite{GrTr19}) in order to meet the requirement that infinite compression costs infinite energy; that is, $h(J)\to\infty$, as $J\to0^+$. Thus, $J_0>1$ is not a restriction.} . This is a fundamental assumption for the study of variational stability of phase boundaries in Hadamard materials \cite{GrTr16,GrTr19}. The configurations we will analyze twins two pure stretch deformation gradients given by
\begin{equation}
\label{twins0}
\Fm:=\begin{pmatrix}
\big(J_0-\frac{1}{\sqrt{3}}\big)\frac{1}{\sqrt{\theta_0}}& 0\vspace{.3cm}\\
0 & \sqrt{\theta_0}
\end{pmatrix},\quad\quad  
\Fp:=\begin{pmatrix}
\big(J_0+\frac{1}{\sqrt{3}}\big)\frac{1}{\sqrt{\theta_0}} & 0\vspace{.3cm}\\
0 & \sqrt{\theta_0}
\end{pmatrix},
\end{equation}
where
\begin{equation}\label{teta0} 
\theta_0=\frac{3\mu J_0}{2}.
\end{equation} 
$\bFu$ are clearly rank-one connected (see \eqref{eq:rank-one}) with $\bm{a}=\tfrac{2}{\sqrt{3\theta_0}}\eu$ and $\bn=\eu$, since
\begin{equation}\label{HadJu1}
\Fp-\Fm=\frac{2}{\sqrt{3\theta_0}}\eu\otimes\eu.
\end{equation}
Moreover, $J_0>1$ implies that
\begin{equation}
\label{detpos}
J^{\pm}=\det \bFs=\big(J_0\pm \frac{1}{\sqrt{3}}\big)>0.
\end{equation} 
The rotationally invariant energy deformations (states) are defined by 
\[
\mathscr{U}^+=\text{SO(2)}\Fp, \qquad  \mathscr{U}^-=\text{SO(2)}\Fm,
\]
where
\[
\text{SO(2)}=\{\bm{Q}\in\R^{2\times 2}: \bm{Q}^{\top}\bm{Q}=\Ido,\; \det \bm{Q}=1\},
\]
is the group of proper rotations in the plane. It should be emphasized that the states $\bFu$ are not local minima of $W$, given that $\bm{\sigma}(\bFu)\neq0$ (see Eqs. \eqref{Sigtwin} below). Nevertheless, this assumption is not needed for the dynamical stability analysis of Freist\"{u}hler and Plaza (see \cite{FrP1}, Theorem 1). Instead, the relevant property for the present work is that the phase boundary \eqref{shock} defined by  $\bF^{\pm}=\bFu$ and $\bn=\eu$ (namely, $(\bFu,\bm{0},0,\eu)$) belongs to the jump set  (see Remark \ref{Eder1} below) and is stable in the variational sense whenever 
 \[
 \mu>\frac{2}{3}\Big(1+\frac{1}{\sqrt{3}J_0}\Big).
 \]
 This is established in Appendix \ref{L-Hgt}, Section \ref{Exa1}, through the verification of  the generalized Legendre-Hadamard conditions \cite{GrTr16}, which characterize a stable portion of the jump set. In other words, the considered family of phase boundaries  $(\bFu,\bm{0},0,\eu)$, parametrized by $J_0 > 1$ and $\mu > 0$, lies within a stable subset of the jump set for the given range of material parameters.

For hyperelastic materials of Hadamard type characterized by \eqref{Hadamardmat0}-\eqref{hdef1}, we analytically evaluate the Lopatinski\u{\i} function   $\Delta$ associated to the (static) phase boundaries $(\bFu,\bm{0},0,\eu)$. We characterize the dynamical stability for all values of the material parameters $J_0>1$ and $\mu>0$, under both the Maxwell rule and a specific Abeyaratne-Knowles rule, which can be regarded as a perturbation of the former.  

\subsection{Derivatives of the stress}

The first Piola-Kirchhoff stress tensor $\bm{\sigma} =\partial W/\partial \bF $ is given by (see \cite{PlVa22})
\begin{equation}
\label{HadPiolKir}
\bm{\sigma}(\bF) = \mu \bF + h'(J) \, \Cof \bF , \qquad \bF \in \R_+^{d\times d}.
\end{equation}

On the other hand, in dimension $d =2$ the matrices  $\bB_i^j$ containing the  second derivatives of the stored-energy function \eqref{Hadamardmat0} are given by (see Corollary 2.8 in \cite{PlVa22})
\begin{equation}
\label{exprBijs}
\begin{aligned}
\bB_{i}^{i} (\bF)&=\mu \mathbb{I}_2+h''(J)\Big((\Cof \bF)_{i}\otimes(\Cof \bF)_{i}\Big),\qquad i=1,2\\
\bB_{1}^{2}(\bF)&=h''(J)\Big((\Cof \bF)_{2}\otimes(\Cof \bF)_{1}\Big)+ h'(J)(\ed\otimes \eu - \eu\otimes \ed),\\
\bB_{2}^{1}(\bF)&= \bB_{1}^{2}(\bF) ^\top,
\end{aligned}
\end{equation}
for each $\bF \in \R_+^{2\times 2}$. Using the first and second derivative of the volumetric density function \eqref{hdef1},
$$h'(J):=\frac{1}{J_0}\big((J-J_0)^3-(J-J_0)\big),\qquad h''(J):=\frac{1}{J_0}\big(3(J-J_0)^2-1\big),$$
we can evaluate $\bB_{i}^{j}$ at the deformation states $\bFu$ defined in \eqref{twins0}. Using \eqref{detpos}, we readily obtain $h''(J^{\pm})=0$, which implies 
\begin{equation}
\label{BEpm}
{\bB^i_i}^{\pm}=\mu\Ido,\quad (i=1,2), \qquad {\bB_1^2}^{\pm}=({\bB_2^1}^{\pm})^{\top}=\pm h_0\begin{pmatrix} 0 & -1\\\ 1 & 0 \end{pmatrix},
\end{equation}
where 
\begin{equation}\label{ache0}
h_0:=h'(J^{+})=\frac{-2}{3J_0\sqrt{3}}.
\end{equation} 
From the relations \eqref{BEpm} and \eqref{defacouten}, the acoustic tensor at $\bm{\xi}=\eu$ and $\bF=\bFs$ is given by
\[
\bN(\eu,\bFu)={\bB^1_1}(\bFs)=\mu\Ido.
\]
Therefore $\min\{\text{eigenvalues of}\:\bN(\eu,\bFu)\}=\mu$ and, according to \eqref{subSp1}, our phase boundary is subsonic for any traveling speed $s$ such that $0\leq s^2<\mu$; consequently, the stability analysis of \cite{FrP1} applies.

\subsection{Rank-one convexity and the constant multiplicity assumption}

In this Section, we verify the rank-one convexity property at the end states $\bFu$  (local hyperbolicity) and the constant multiplicity assumption (see Section \ref{model1}). Due to the double-well form of $h$ (see \eqref{hdef1}), $W$ becomes non-convex, giving rise to the \textit{spinodal region}, defined as the set of states $(\bF,\bm{v})\in\cU$ at which  the  Legendre Hadamard condition fails to hold.  

Let us find the region $\cU_H$ of states $(\bF,\bm{v})\in\cU$, where the Legendre-Hadamard condition,
\begin{equation}
\label{LHcond}
\bm{\eta}^{\top} \bN(\bm{\xi},\bF) \bm{\eta} > 0, \quad \text{for all } \; \bm{\xi}, \bm{\eta} \in \R^d \backslash \{0\},
\end{equation}
holds for $W$. $\cU_H$ is also called the region of \textit{hyperbolicity}. Since it is an open set,  we only need to check that $\bFu\in\cU_H$ in order to ensure rank-one convexity at each state. This, in turn, implies that the system \eqref{model} is hyperbolic in open neighborhoods of $\bFu$ (see, e.g., \cite{Da4e}).  For the general class of compressible Hadamard materials characterized by \eqref{Hadamardmat0}, the acoustic tensor and its eigenvalues can be explicitly calculated for all dimension $d\geq2$ (cf. \cite{PlVa22}). It is given by 
\begin{equation}
\label{exprAT}
\bN(\bm{\xi},\bF) = \mu |\bm{\xi}|^2 \Id + h''(J) \Big( \big( (\Cof \bF) \bm{\xi} \big) \otimes \big( (\Cof \bF) \bm{\xi} \big) \Big),
\end{equation}
for $\bm{\xi} \in \R^d$, $\bm{\xi} \neq 0$, $\bF \in \R_+^{d\times d}$,
with associated eigenvalues 
\begin{equation}
\label{defkaps}
\begin{aligned}
\kappa_1(\bm{\xi},\bF)&:= \mu|\bm{\xi}|^2,\\
\kappa_2(\bm{\xi},\bF)& := \mu|\bm{\xi}|^2 + h''(J) \big| (\Cof \bF)\bm{\xi} \big|^2,
\end{aligned}
\end{equation}
and with algebraic multiplicities $m_1 = d-1$ and $m_2 = 1$, respectively. Thus,  $W$ in \eqref{Hadamardmat0} satisfies  the constant multiplicity property at any state $\bF$ and for any function $h$ of that form. In terms of \eqref{defkaps}, the Legendre-Hadamard condition \eqref{LHcond} is tantamount to the positivity of each eigenvalue for all $\bm{\xi}\neq0$. Notice that this condition holds trivially at any state $\bF$ if $h''(J)>0$ in $(0,\infty)$; this is not the case, however, for the function $h$ considered in \eqref{hdef1}. Thanks to the explicit formulae of the eigenvalues of the acoustic tensor,  $\cU_H$  can be computed explicitly for any smooth function $h$. The first eigenvalue in \eqref{defkaps} is always positive; however, the positivity of the second one might fail to hold when $h''(J)<0$. To investigate the sign of $\kappa_2$, note that it can be recast as
\[
\kappa_2(\bm{\xi},\bF) := \bm{\xi}^{\top}\Big(\mu \Id+ h''(J) (\Cof \bF)^{\top}\Cof \bF\Big)\bm{\xi}.
\]
Thus, the sign of $\kappa_2$ as a function of $\bm{\xi}$ is determined by the symmetric matrix $\mu \Id+ h''(J) (\Cof \bF)^{\top}\Cof \bF$. The eigenvalues of this matrix have the form $\mu+h''(J)\theta$, where $\theta$ is any eigenvalue of $(\Cof \bF)^{\top}\Cof \bF$. Thus, if $\theta_{\rm{max}}$ denotes  the greatest eigenvalue of matrix $(\Cof \bF)^{\top}\Cof \bF=\Cof(\bF^{\top}\bF)$, then the positivity of $\kappa_2$, and therefore the Legendre-Hadamard condition, is equivalent to the condition $\mu+h''(J)\theta_{\rm{max}}>0$. Hence, the hyperbolicity region has the general form
\[
\cU_{H} = \Big\{ (\bF,\bm{v}) \in \R^{d \times d} \times \R^d \, : \, J=\det \bF > 0, \,\; \mu+h''(J)\theta_{\rm{max}}>0\Big\},
\]
where $\theta_{\rm{max}}=\theta_{\rm{max}}(\bF)$ is the greatest eigenvalue of  $\Cof (\bF^{\top} \bF)$.  Note that the last definition includes the case $h''(J)>0$, given that  all eigenvalues of $(\Cof \bF)^{\top}\Cof \bF=\Cof(\bF^{\top}\bF)$ are positive ($J>0$ implies non zero eigenvalues). Observe that, for a given function $h$, $\cU_{H}$ is expressed solely in terms of the deformation gradient $\bF$, through $\det \bF=J$ and the largest eigenvalue of $\Cof(\bF^{\top}\bF)$. Thus, with a slight abuse of notation we write $\cU_{H}=\cU_{H}(\bF)$. Moreover, $\cU_{H}$ can be characterized in terms of all the eigenvalues, $\theta_1,\cdots,\theta_d$, of $\Cof(\bF^{\top}\bF)$, in view that
\begin{equation}
\label{cofdet1}
\prod_{k} \theta_k=\det \big(\Cof(\bF^{\top}\bF)\big)= J^{2(d-1)}.
\end{equation}
Indeed, for $d=2$ and $h$ defined by \eqref{hdef1}, assume $\theta$, $\theta'$ are the two (positive) eigenvalues of $\Cof(\bF^{\top}\bF)$. From \eqref{cofdet1} we have  $J=\sqrt{\theta}\sqrt{\theta'}$ and since 
\[
h''(J)=\frac{1}{J_0}\big(3(J-J_0)^2-1\big),
\]
the hyperbolicity region associated to the selected function \eqref{hdef1} can be written in terms of $\theta,\theta'$ as
$$\cU_{H}(\bF)=\cU_{H}(\theta,\theta'):= \Big\{ (\theta,\theta') \in \R^+\times \R^+ \, : \,  \mu+\tfrac{\widetilde{\theta}}{J_0}\big(3(\sqrt{\theta}\sqrt{\theta'}-J_0)-1\big)^2>0\Big\},$$
where $\widetilde{\theta}=\max\{\theta,\theta' \}$. 

Figure \ref{figcontour1} shows  $\cU_{H}$ (shaded region) in the $(\theta, \theta')$-plane for two pairs of material parameters, $\mu=1$, $J_0=1$ (panel (a)), and $\mu=1.2$, $J_0=1.3$ (panel (b)). The dashed lines represent the points where $h''(J)=0$ (or equivalently, where $J=\sqrt{\theta}\sqrt{\theta'}$), so these lines bound the regions where  $h''$ is positive and negative. Observe that there are points $(\theta,\theta')$ in the hyperbolicity region  for which $h''<0$ (interior region bounded by dashed lines). Finally, it is easy to verify that  $h''$ vanishes at  $\bFu$ (that is, $h''(J^{\pm})=0$) by direct substitution of \eqref{detpos}. That is, $\bFu\in\cU_H$ and then the Legendre-Hadamard condition holds. It also implies that the system in \eqref{HSCL} is hyperbolic in open neighborhoods of the two end states $\bFu$.
    
\begin{figure}[t]
\begin{center}
\subfigure[$\mu=1$, $J_0=1$]{\label{hipReg1}\includegraphics[scale=.75, clip=true]{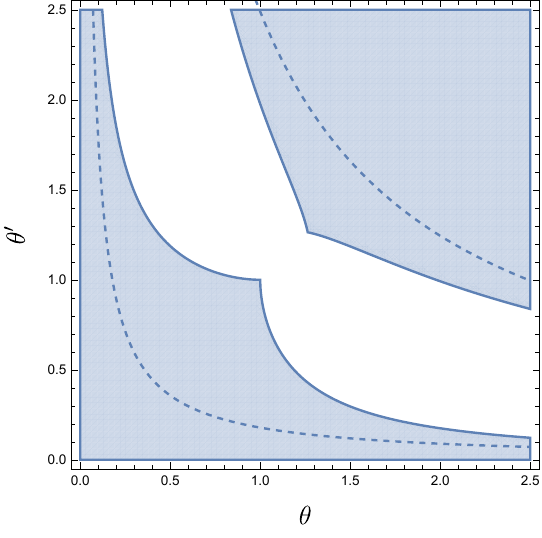}}\quad\qquad
\subfigure[$\mu=1.2$, $J_0=1.3$]{\label{hipReg2}\includegraphics[scale=.75, clip=true]{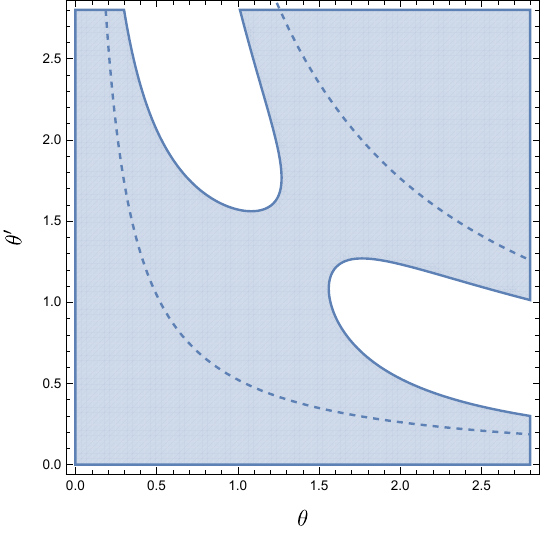}}
\end{center}
\caption{\small{Illustration of the hyperbolicity region $\cU_{H}$ (shaded region) for two pairs of values of the material parameters $J_0,\mu$. The spinodal region is contained in the region where $h''(J)<0$ (connected region bounded by the dashed lines).}}
\label{figcontour1}
\end{figure}

\subsection{Jump conditions across the interface}
\label{JumpC1}

In this Section we verify that the phase boundary $(\bFu,\bm{0},0,\eu)$ satisfies the Rankine-Hugoniot jump conditions \eqref{RHe1} and the jump conditions associated to both kinetic rules, \eqref{King0} and \eqref{MaxRu}. By substituting $\bFu$, $\bm{v}^{+}=\bm{v}^{-}=0$, $s=0$ and $\bn=\eu$ into \eqref{RHe1}, one verifies that the first equation  is trivially satisfied, whereas the third one, which in dimension $d=2$ simplifies to (see Remark \ref{crosrel})
\[
-\llb \bF\rrb\eu^{\perp}=\bm{0},
\]
is likewise verified. On the other hand, substituting \eqref{twins0} into \eqref{HadPiolKir} and taking into account that $h'(J^+)=h_0=-h'(J^{-})$, we obtain
\begin{equation}\label{Sigtwin}
\begin{aligned}
&\bm{\sigma}(\Fm):=\frac{1}{\sqrt{\theta_0}}\begin{pmatrix}
\mu J_0-\frac{\mu}{\sqrt{3}}-h_0\theta_0 & 0\\
0 & -h_0 J_0+\frac{h_0}{\sqrt{3}}+\mu\theta_0
\end{pmatrix},\\
&\bm{\sigma}(\Fp):=\frac{1}{\sqrt{\theta_0}}\begin{pmatrix}
\mu J_0+\frac{\mu}{\sqrt{3}}+h_0\theta_0 & 0\\
0 & h_0 J_0+\frac{h_0}{\sqrt{3}}+\mu\theta_0
\end{pmatrix},
\end{aligned}
\end{equation}
and therefore
\begin{equation}
\label{sigjum1}
\begin{aligned}
\llb \bm{\sigma}(\bF) \rrb = \bm{\sigma}(\Fp)-\bm{\sigma}(\Fm) &= \frac{1}{\sqrt{\theta_0}}\begin{pmatrix}
\frac{2\mu}{\sqrt{3}}+2h_0\theta_0 & 0\\
0 & 2J_0h_0
\end{pmatrix}\\
&=\frac{2J_0 h_0}{\sqrt{\theta_0}}\begin{pmatrix} 
0 & 0\\
0 & 1
\end{pmatrix},
\end{aligned}
\end{equation}
in view that $\theta_0=\frac{3}{2}\mu J_0$ and $h_0 = -2/(3J_0\sqrt{3})$. Consequently, one readily finds that $\llb \bm{\sigma}(\bF) \rrb\bn=\llb \bm{\sigma}(\bF) \rrb\eu=0$ and the second equation in \eqref{RHe1} also holds. 

Now, let us verify the kinetic rules; substitute \eqref{detpos} into \eqref{hdef1} to obtain $h(J^+)=h(J^{-})=1/(9J_0)$, which implies
\[
\llb W\rrb = W(\Fp)-W(\Fm) = \frac{2J_0\mu }{\sqrt{3}\theta_0},
\]
and
\[
\llb \bF\rrb^\top\langle \bm{\sigma} (\bF)\rangle = (\Fp-\Fm)^{\top}\frac{1}{2}(\bm{\sigma}(\Fp)+\bm{\sigma}(\Fm)) 
=\frac{2J_0\mu }{\sqrt{3}\theta_0}\begin{pmatrix} 
1 & 0\\
0 & 0
\end{pmatrix}.
\]
By combining the last two expressions, it immediately follows that, for $\bn=\eu$, one has
\[
\llb W \rrb - \bn^{\top}\llb \bF\rrb^\top\langle \bm{\sigma} (\bF)\rangle\bn=0.
\]
That is, the Maxwell rule \eqref{King0} (with $\tilde{g}=0$) is satisfied at $(\bFu,\bm{0},0,\eu)$. Notice that the Abeyaratne-Knowles rule 
\[
\mathscr{F}+\tilde{g}=0,
\]
is also satisfied at the considered phase boundary, given that $s=0$ and that $\tilde{g}$ satisfies \eqref{AbeKR02}.
 
\begin{remark}
\label{Eder1}
We have shown that the phase boundary $(\bFu,\bm{0},0,\eu)$ satisfies the Rankine-Hugoniot jump conditions and the two kinetic rules under consideration. These conditions, together with the constant multiplicity assumption and the rank one convexity, allow the analysis of Freist\"{u}hler and Plaza \cite{FrP1} to be applied to the study of dynamical stability of  $(\bFu,\bm{0},0,\eu)$. Notice that the additional jump condition \eqref{halfgra2} is also satisfied. Indeed, Eq. \eqref{HadJu1} implies that $\boldsymbol{a}=(2/\sqrt{3\theta_0})\eu$. Substituting this relation and \eqref{sigjum1} into \eqref{halfgra2} yields the result. Hence, in view of Remark \ref{ReGT2} and within the variational framework, the phase boundary $(\bFu,\bm{0},0,\eu)$ belongs to the jump set for all $\mu>0$ and $J_0>1$. In Appendix \ref{L-Hgt}, Section \ref{Exa1}, it is shown that, under the specific choice of the material parameters \eqref{cons3}, the phase boundary $(\bFu,\bm{0},0,\eu)$ satisfies the generalized Legendre-Hadamard conditions derived by Grabovsky and Truskinovsky \cite{GrTr16}, which determine a stable portion of the jump set.
\end{remark}

\subsection{Derivatives of the kinetic rule}
In this section we gather the derivatives of $\tilde{g}$ and $\mathscr{F}$ from Eqs. \eqref{King2}-\eqref{AbeKR}, with respect to the parameters $\bF^+,\bm{v}^{+}$, $\bF^-,\bm{v}^{-}$, $\bn$ and $s$. They are needed to compute some matrix blocks of the Lopatinski\u{\i} determinant. In \cite{PL2}, these derivatives are computed for dimension $d=3$; here, we provide the formulae corresponding to the two-dimensional case. Following \cite{PL2}, for each $i,j$, we have
\[
\begin{aligned}
\partial_{F_{ij}^{\pm}}\big(\bn^{\top}\llb \bF\rrb^\top\langle \bm{\sigma} (\bF)\rangle\bn\big)&=\pm n_j\sum_{k} n_k\langle \sigma (\bF)_{ik}\rangle+\frac{1}{2}\bn^{\top}\llb \bF\rrb^\top\sum_{l} n_l \partial_{F_{ij}^{\pm}}\big(\bm{\sigma}(\bFs)_l\big)\\
&=\pm n_j\sum_{k} n_k\langle \sigma (\bF)_{ik}\rangle+\frac{1}{2}\sum_{k,l} n_l(\llb \bF\rrb\bn)_k ({\bB_{j}^{l}}^{\pm})_{ki}.
\end{aligned}
\]
Recall $\bm{\sigma}(\bFs)_l$ denotes the $l$-th column of  $\bm{\sigma}(\bFs)$. Hence, in dimension $d=2$, one gets
\[
\begin{aligned}
D_{({\bF}^{\pm}, \bm{v}^{\pm})}\mathscr{F} =
&\Big(
\pm \bm{\sigma}(\bFs)^\top_1 \mp n_1 \bn^\top \langle \bm{\sigma}(\bF) \rangle^\top - \frac{1}{2} \bn^\top \llb \bF\rrb^\top \sum_{j=1}^2 n_j \bB^{j\pm}_1, \\
&\pm \bm{\sigma}(\bFs)^\top_2 \mp n_2 \bn^\top \langle \bm{\sigma}(\bF) \rangle^\top - \frac{1}{2} \bn^\top \llb \bF\rrb^\top \sum_{j=1}^2 n_j \bB^{j\pm}_2,\bm{0}\Big) \in \R^{1 \times 6},
\end{aligned}
\]
where each component is a $1\times 2$ block. Specializing the last formula to $\bn=\eu$, we arrive at
\begin{equation}\label{derivsF}
\begin{aligned}
&D_{(\bF^{+}, \bm{v}^{+})}\mathscr{F} =\Big(\bm{\sigma}(\bF^+)^\top_1 -  \langle \bm{\sigma}(\bF)_1 \rangle^\top - \frac{1}{2}  \llb \bF_1\rrb^\top  \bB^{1 +}_1, {\bm{\sigma}(\bF^{+})_2^{\top}} - \frac{1}{2}  \llb \bF_1\rrb^\top  \bB^{1 +}_2,\bm{0}\Big) \\
&D_{(\bF^{-}, \bm{v}^{-})}\mathscr{F} =\Big(-\bm{\sigma}(\bF^{-})^\top_1 +  \langle \bm{\sigma}(\bF)_1 \rangle^\top - \frac{1}{2}  \llb \bF_1\rrb^\top  \bB^{1 -}_1, -{\bm{\sigma}(\bF^{-})_2^{\top}} - \frac{1}{2}  \llb \bF_1\rrb^\top  \bB^{1 -}_2,\bm{0}\Big).
\end{aligned}
\end{equation}
On the other hand,
\begin{equation}
\label{DeriF1}
D_{\bn}\mathscr{F}=-\bn^{\top}\Big(\llb \bF\rrb^\top\langle \bm{\sigma} (\bF)\rangle+\langle \bm{\sigma} (\bF)\rangle^{\top}\llb \bF\rrb\Big)\in\R^{1\times 2},
\end{equation}
\begin{equation}
\label{DeriF2}
D_{s}\mathscr{F}=0.
\end{equation}
If the Maxwell rule is assumed, we trivially obtain
\begin{equation}
\label{g1Deriv1}
\begin{aligned}
D_{(\bFs,\bm{v}^{\pm})}\tilde{g}=0,\\
D_s \tilde{g}=0,
\end{aligned}
\qquad \text{at}\;\; \big((\Fp,0),(\Fm,0),0,\eu\big).
\end{equation}

We will also consider the simplest, nontrivial Abeyaratne-Knowles rule given by $\tilde{g}=-\varepsilon s$, for a fixed $\varepsilon>0$. This kinetic rule is the easiest way to model energy dissipation at the interface. It implies that 
\begin{equation}
\label{g1Deriv1AN}
\begin{aligned}
D_{(\bFs,\bm{v}^{\pm})}\tilde{g}=0,\\
D_s \tilde{g}=-\varepsilon,
\end{aligned}
\qquad \text{at}\;\; \big((\Fp,0),(\Fm,0),0,\eu\big).
\end{equation}

\section{The reduced Lopatinski\u{\i} function}
\label{RedLop1}

In this Section we compute all the necessary ingredients to assemble the Lopatinski\u{\i} determinant \eqref{gLopdet} associated to $(\bFu,\bm{0},0,\eu)$. In dimension $d=2$ with $\bn = \eu$, the transversal directions ($\bm{\xi}\cdot\bn=0$) are thus given by 
$\bm{\xi} = (0, \xi_2)^\top$. By a slight abuse of notation, we define then  $\xi_2:= \xi \in \R$ so that the set $\Gamma_{\bn}=\Gamma_{\eu}$ of space-time frequencies can be expressed as
\begin{equation}
\label{freqsp1}
\Gamma:=\Gamma_{\eu} = \{ (\lambda, \xi) \in \C \times \R : |\lambda|^2 + |\xi |^2 = 1,  \Re \, \lambda \geq 0 \},
\end{equation}
with interior
\[
\Gamma^+ = \Gamma\cap \{ \Re \, \lambda > 0 \}.
\]
Consider the two continuous matrix fields $\M_\pm : \Gamma \to \C^{4 \times 4}$ defined by
\begin{equation}
\label{MMatrix1}
\M_\pm(\lambda, \xi) := 
\begin{pmatrix}
\M_{11}^\pm & \M_{12}^\pm \\
\M_{21}^\pm & \M_{22}^\pm
\end{pmatrix}, 
\qquad \M_{ij}^\pm : \Gamma \to \mathbb{C}^{2 \times 2},
\end{equation}
with
\[
\begin{aligned}
\M_{11}^\pm &= i \xi (\bB_1^{1\pm})^{-1}  \bB_2^{1\pm} , \\
\M_{12}^\pm &= -(\bB_1^{1\pm})^{-1}, \\
\M_{21}^\pm &= \xi^2 \bB_1^{2\pm}  (\bB_1^{1\pm})^{-1}  \bB_2^{1\pm} - \lambda^2 \Ido -  \xi^2 \bB_2^{2\pm},  \\
\M_{22}^\pm &= i\xi \bB_1^{2\pm} (\bB_1^{1\pm})^{-1}.
\end{aligned}
\]
For a detailed derivation of the elements of $\M$, see Freist\"{u}hler and Plaza \cite{FrP1}. According to \cite{FrP1},  the curl-free constraint \eqref{curlfree} reduces the normal modes analysis for the stability of the considered elastic phase boundaries to a subspace of amplitudes, whose dynamics in the frequency (or Fourier-Laplace) space is described by the matrix fields $\M_\pm$ (that is why it is called the \emph{reduced} Lopatinski\u{\i} determinant). In view of the analysis in \cite{FrP1}, the stability function \eqref{gLopdet} for such a static planar interface takes the form
\begin{equation}
\label{Lopat1}
\Delta(\lambda, \xi) = \det \begin{pmatrix}
\widetilde{\mathcal{R}}^s_{-} & \widehat{\mathcal{Q}} & \widetilde{\mathcal{R}}^u_+ \\
\widetilde{p}^- & \widehat{q} & \widetilde{p}^+
\end{pmatrix},
\end{equation}
where
\begin{equation}\label{pQgorro}
\widehat{\mathcal{Q}} = 
\begin{pmatrix}
\llb \bF_1\rrb\\
- \ii \xi\llb\bm{\sigma}(\bF)_2\rrb
\end{pmatrix}
\in \mathbb{C}^{4 \times 1},
\end{equation}
\begin{equation}\label{pqgorro}
\widehat{q} = -\lambda (D_s g) + \ii \xi\: \ed\cdot D_{\bn} g \in \C^{1 \times 1},
\end{equation}
\begin{equation}\label{ppgorrma}
\widetilde{p}^+ = (D_{(\bF^+, \bm{v}^+)} g) \mathcal{K}_+ (\lambda, \xi) \widetilde{\mathcal{R}}_+^{u} \in \C^{1 \times 2},
\end{equation}
\begin{equation}\label{pqgorrme}
\widetilde{p}^- = -(D_{(\bF^-, \bm{v}^-)} g) \mathcal{K}_{-} (\lambda, \xi) \widetilde{\mathcal{R}}_{-}^{s} \in \C^{1 \times 2},
\end{equation}
\begin{equation}
\mathcal{K}_\pm(\lambda, \xi) =
\begin{pmatrix}
\ii \xi(\bB_1^{1\pm})^{-1} \bB_2^{1\pm}  & -(\bB_1^{1\pm})^{-1} \\
-\ii \xi \Ido & 0 \\
-\lambda \Ido & 0
\end{pmatrix}
\in \C^{6\times 4},
\end{equation}
and  $\widetilde{\mathcal{R}}_+^{u}:=\widetilde{\mathcal{R}}^{u}(\bF^+)(\lambda, \xi)\in \C^{4 \times 4}$ (resp. $\widetilde{\mathcal{R}}_-^{s}$)  denotes the unstable (resp. stable) space  of the matrix field $\M_{+}$ (resp. $\M_{-}$). Recall that $\Delta$ is analytic in $(\lambda, \xi) \in \Gamma^+$ and continuous in $(\lambda, \xi) \in \Gamma$.

\subsection{Stable and unstable modes}

It this Section, explicit formulae for the invariant spaces $\widetilde{\mathcal{R}}_+^{u}$ and $\widetilde{\mathcal{R}}_-^{s}$ are derived for Hadamard materials \eqref{Hadamardmat0}-\eqref{hdef1} and the phase boundary $(\underline{F}^{\pm},\bm{0},0,\eu)$. They are crucial for the derivation of a closed-form  expression for the associated stability function or Lopatinski\u{\i} determinant.  For general hyperelastic materials, explicit expressions for these spaces are generally unavailable (see, e.g., \cite{PL2}). For the sake of simplicity, in the present two-dimensional case let us write
\[
\bN_{\pm}(\eta,\xi):=\bN\big((\eta,\xi)^{\top},\bFs),
\] 
for $\eta\in\C$, to denote the acoustic tensor defined in \eqref{exprAT}. Freist\"uhler and Plaza \cite{FrP1} showed that for $s=0$, the eigenvalues $\beta=-\ii\eta\in\C$ of $\M_{\pm}$ satisfy the following characteristic equation 
\begin{equation}
\label{charEgn0}
\det\big(\bN_{\pm}(\eta,\xi)+\lambda^2\Ido\big)=0,\quad\text{for all}\quad (\lambda,\xi)\in\Gamma.
\end{equation}

Moreover, if we restrict our analysis to $\Gamma^+$, then the eigenvalues are split into stable and unstable modes depending on whether  $\Im\eta<0$ or $\Im\eta>0$, due to hyperbolicity at the end states. This hyperbolic dichotomy in the dynamical systems sense was originally pointed out by Hersh \cite{Her63}. Under the current assumption, there are exactly two unstable and two stable eigenvalues (counting multiplicities) for each $(\lambda,\xi)\in\Gamma^+$, implying that the stable and unstable spaces of both $\M_{\pm}$ have constant dimension $d=2$ for all $\Gamma^+$. For details see \cite{FrP1} and the references therein. From Lemma 6 in the same reference,  each eigenvector of $\M_{\pm}$ associated to an eigenvalue $\beta=-\ii\eta$ has the form
\begin{equation}
\label{eigvec0}
\bm{r} = \begin{pmatrix}
\bm{w} \\ \ii\big(\eta \bB_{1}^{1 \pm}+\xi \bB_{2}^{1 \pm}\big)\bm{w}
\end{pmatrix},
\end{equation}
where 
\begin{equation}\label{kerpm}
\bm{w}\in \ker\big(\bN_{\pm}(\eta,\xi)+\lambda^2\Ido\big).
\end{equation}
Thanks to the explicit formula of the acoustic tensor \eqref{exprAT}, we can compute the $\eta$-modes from Eq. \eqref{charEgn0}. Using \eqref{BEpm}, we can evaluate \eqref{charEgn0} at the end states \eqref{twins0}, obtaining $$(\mu(\eta^2+\xi^2)+\lambda^2)^2=0.$$
Since the equation does not involve $\M_+$ nor $\M_{-}$, it yields the same eigenvalues $\beta=\ii\eta$ for both matrices $\M_{\pm}$. Solving for $\eta$ we obtain
\begin{equation}
\label{eigvec1}
\eta=\pm\ii\tilde{\eta},\qquad \text{where}\;\; \tilde{\eta}(\lambda,\xi)=\sqrt{\frac{\lambda^2}{\mu}+\xi^2},
\end{equation}
each of them with multiplicity equal to two for all $(\lambda,\xi)\in\Gamma^+$. The square root denotes the principal  branch, so the stable ($\Im \eta<0$) and unstable ($\Im \eta>0$) modes are $\eta=-\ii\tilde{\eta}$ and
 $\eta=\ii\tilde{\eta}$, respectively. To compute the associated eigenvectors, we evaluate \eqref{eigvec0} and \eqref{kerpm} at the end states $\bFu$ to find that $\bN_{\pm}(\eta,\xi)+\lambda^2\Ido = \bm{0}$ and, henceforth, that the right eigenvectors associated to $\beta=-\ii\eta$ have the form
 \begin{equation}
 \label{eigvecp0}
\bm{r}^{\pm}(\eta,\bm{w})= \begin{pmatrix}
\bm{w} \\  \begin{pmatrix}
 \mu\eta\ii & \pm \ii\xi h_0\\
 \mp \ii\xi h_0 & \mu\eta \ii
 \end{pmatrix} \bm{w}
\end{pmatrix},\quad\text{where}\;\; \bm{w}\in \ker \bm{0} = \C^2 = \mathrm{span}\{\eu,\ed\}.
\end{equation}
From Eq. \eqref{eigvecp0}, it is easy to verify that the vectors $\bm{r}_1=\bm{r}^{\pm}(-\ii\tilde{\eta},\eu)$ and  $\bm{r}_2=\bm{r}^{\pm}(-\ii\tilde{\eta},\ed)$ are linearly independent for all $(\lambda,\xi)\in\Gamma^+$ and, therefore, that they form a  basis of the stable invariant  space of $\M_{\pm}$ associated to $\eta=-\ii\tilde{\eta}$. The stable bundle  (matrix whose columns are $\bm{r}_1$ and $\bm{r}_2$) is given by
\begin{equation}
\label{stable}
\widetilde{\mathcal{R}}_{\pm}^{s}(\lambda,\xi)= \begin{pmatrix}
1 &  0 \\ 
0 & 1 \\
\mu \tilde{\eta} & \pm \ii \xi h_0\\
\mp \ii \xi h_0 & \mu \tilde{\eta}
\end{pmatrix}.
\end{equation}

Following the same procedure, vectors $\bm{r}_3=\bm{r}^{\pm}(\ii\tilde{\eta},\eu)$ and  $\bm{r}_4=\bm{r}^{\pm}(\ii\tilde{\eta},\ed)$ form a  basis of the unstable invariant  space of $\M_{\pm}$ associated to $\eta=\ii\tilde{\eta}$. Consequently, the unstable bundle is
\begin{equation}
\label{unstable}
\widetilde{\mathcal{R}}_{\pm}^{u}(\lambda,\xi)= \begin{pmatrix}
1 &  0 \\ 
0 & 1 \\
-\mu \tilde{\eta} & \pm \ii \xi h_0\\
\mp \ii \xi h_0 & -\mu \tilde{\eta}
\end{pmatrix}.
\end{equation}

\subsection{Kinetic rule and jump condition blocks}

In this Section, we evaluate all the block matrices in $\Delta$ (see Eq. \eqref{Lopat1}) at the phase boundary $(\bFu,\bm{0},0,\eu)$ for Hadamard materials \eqref{Hadamardmat0}-\eqref{hdef1}, which satisfy Rankine-Hugoniot jump conditions and the kinetic rules \eqref{MaxRu} and \eqref{AbeKR}. First, from \eqref{twins0} and \eqref{sigjum1}, we have
\[
\llb \bF\rrb=\frac{2}{\sqrt{3\theta_0}}\begin{pmatrix} 
1 & 0\\
0 & 0
\end{pmatrix},\qquad 
\llb \bm{\sigma}(\bF)\rrb=\frac{2J_0 h_0}{\sqrt{\theta_0}}\begin{pmatrix} 
0 & 0\\
0 & 1
\end{pmatrix}.
\]
Thus, from \eqref{pQgorro} we get
\begin{equation}\label{Qrem}
\widehat{\mathcal{Q}} = 
\begin{pmatrix}
\llb \bF_1\rrb \vspace{0.3cm}\\
-i\xi\llb\bm{\sigma}(\bF)_2\rrb
\end{pmatrix}=\begin{pmatrix}
\frac{2}{\sqrt{3\theta_0}}\eu
\vspace{0.3cm}\\
\frac{-2\ii J_0 h_0\xi}{\sqrt{\theta_0}}\ed
\end{pmatrix}
\in \mathbb{C}^{4 \times 1}.
\end{equation}
From Eqs. \eqref{derivsF}, \eqref{DeriF1}, \eqref{DeriF2} and \eqref{exprBijs}, we obtain 
\[
\begin{aligned}
(D_s g)|_{s=0} &= D_s(\mathscr{F} + \tilde{g})|_{s=0} = (D_s \tilde{g})|_{s=0}, \\
(D_{\bn} g)|_{s=0} &= D_{\bn}(\mathscr{F} + \tilde{g})|_{s=0} = 0, \\
(D_{(\bFs, \bm{v}^{\pm})} g)|_{s=0} &= D_{(\bFs, \bm{v}^{\pm})} (\mathscr{F}+\tilde{g})|_{s=0} = \left(h_0\sqrt{\theta_0}, 0,0, \tfrac{J_0 h_0\pm\mu\theta_0}{\sqrt{\theta_0}},0,0\right) \in \mathbb{R}^{1 \times 6}.
\end{aligned}
\]
Using Eqs. \eqref{exprBijs} and \eqref{BEpm}, we also compute
\[
\mathcal{K}_{\pm}(\lambda, \xi) = 
\left(
\begin{array}{cc}
\pm\frac{\ii h_0 \xi}{\mu}
\left(
\begin{array}{cc}
0 & 1 \\
-1 & 0  
\end{array}
\right) & -\frac{1}{\mu}\Ido\\
-\ii \xi \Ido & 0 \\
-\lambda \Ido & 0
\end{array}
\right).
\]
Hence, Eqs. \eqref{pqgorro}-\eqref{pqgorrme} evaluated at $s=0$ finally become 
\begin{align}
&\widehat{q} = -\lambda (D_s g) + \ii (0, \xi)^\top (D_{\bn} g)= -\lambda \big(D_s \tilde{g}|_{s=0}\big)\in \C^{1 \times 1},\label{qrem}\\
&\widetilde{p}^+ = (D_{(\bF^+, \bm{v}^+)} g) \mathcal{K}_+ (\lambda, \xi) \widetilde{\mathcal{R}}_+^{u}(\lambda, \xi),\nonumber\\
&\;\quad =\Big(h_0\sqrt{\theta_0}\tilde{\eta},-\dfrac{\ii(J_0 h_0+\mu\theta_0)\xi}{\sqrt{\theta_0}}\Big)\in \C^{1 \times 2},\label{pmarem}\\
&\widetilde{p}^- = -(D_{(\bF^-, \bm{v}^-)} g) \mathcal{K}_{-} (\lambda, \xi) \widetilde{\mathcal{R}}_{-}^{s}(\lambda, \xi),\nonumber\\
&\;\quad=\Big(h_0\sqrt{\theta_0}\tilde{\eta},\dfrac{\ii(J_0 h_0-\mu\theta_0)\xi}{\sqrt{\theta_0}}\Big)\in \C^{1 \times 2}\label{pmerem}.
\end{align}

\section{Evaluating the Lopatinski\u{\i} determinant}
\label{seclopa}

We now have all ingredients to assemble the Lopatinski\u{\i} determinant and to obtain some stability results for the selected phase boundary from it.

\subsection{Lopatinski\u{\i} function and one dimensional stability} 

Substitute \eqref{stable}, \eqref{Qrem}, \eqref{unstable}, \eqref{pmerem}, \eqref{qrem} and \eqref{pmarem} into \eqref{Lopat1} to obtain
\begin{equation}\label{Lopaq1}
\begin{aligned}
\Delta(\lambda,\xi)&=\det \left(
\begin{array}{ccccc}
 1 & 0 & \frac{2}{\sqrt{3} \sqrt{\theta_0}} & 1 & 0 \\
 0 & 1 & 0 & 0 & 1 \\
 \tilde{\eta} \mu  & -i h_0 \xi & 0 & -\tilde{\eta} \mu  & i h_0
   \xi \\
 i h_0 \xi & \tilde{\eta} \mu  & -\frac{2 i J_0 h_0
   \xi}{\sqrt{\theta_0}} & -i h_0 \xi & -\tilde{\eta} \mu  \\
 \tilde{\eta} h_0 \sqrt{\theta_0} & \frac{i \xi (J_0
   h_0-\mu  \theta_0)}{\sqrt{\theta_0}} & -\lambda \big(D_s \tilde{g}|_{s=0}\big) & \tilde{\eta}
   h_0 \sqrt{\theta_0} & -\frac{i \xi (J_0 h_0+\mu 
   \theta_0)}{\sqrt{\theta_0}} \\
\end{array}
\right)\\
   &=-\frac{4}{3 \theta_0} \left(2 \tilde{\eta} h_0 \big(\theta_0\sqrt{3}\; \tilde{\eta}^2\mu^2 + (3\mu  h_0J_0^2 -\sqrt{3} h_0^2
   \theta_0) \xi^2\big)+3 \lambda \theta_0 g_0 (\tilde{\eta}^2 \mu
   ^2-h_0^2 \xi^2)\right)
\end{aligned}
\end{equation}
where  $g_0:=D_s \tilde{g}|_{s=0}$. This is the main algebraic expression for the Lopatinski\u{\i} determinant we shall be working with. It encodes all the information regarding the dynamical stability of the phase boundary $(\bFu,\bm{0},0,\eu)$ (of the form \eqref{shock}).

\begin{lemma}
\label{lelopa1}
The Lopatinski\u{\i} determinant \eqref{Lopaq1} can be written in normalized form as
\begin{equation}\label{Lopaq2}
\begin{aligned}
\widehat{\Delta}(\lambda,\xi)&:=-\frac{3\theta_0}{2\mu^3}\Delta(\lambda,\xi)\\
&=-4\Big(\dfrac{\lambda^2}{\mu}+ \delta_1 \xi^2\Big)\sqrt{\dfrac{\lambda^2}{\mu}+ \xi^2}+9J_0g_0 \lambda \Big( \dfrac{\lambda^2}{\mu}+\delta_2\xi^2\Big),
\end{aligned}
\end{equation}
where
\begin{equation}
\label{defdeltas}
\delta_1:=1-\frac{4}{27J_0^2\mu^2}-\frac{4}{9\mu^2}, \quad \delta_2:=1-\frac{4}{27J_0^2\mu^2}.
\end{equation}
\end{lemma}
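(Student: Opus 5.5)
The plan is to start from the closed form of $\Delta$ already recorded in the last line of \eqref{Lopaq1}, multiply it by the normalizing factor $-3\theta_0/(2\mu^3)$, and then eliminate $\theta_0$, $h_0$ and $\tilde{\eta}^2$ using three relations established earlier. These are $\theta_0=\tfrac{3}{2}\mu J_0$ from \eqref{teta0}, $h_0=-2/(3\sqrt{3}J_0)$ from \eqref{ache0}, and $\tilde{\eta}^2=\lambda^2/\mu+\xi^2$ from \eqref{eigvec1}. No further matrix computation is needed. I take the expansion of the $5\times5$ determinant in \eqref{Lopaq1} as given, since it is stated in the text. If one wanted to check it, a cofactor expansion along the third column, the only one containing $\widehat{\mathcal{Q}}$ and $\widehat{q}$, would do it.

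\emph{Prefactor.} The factor $-3\theta_0/(2\mu^3)$ times $-4/(3\theta_0)$ gives $2/\mu^3$. The normalized function is therefore $\tfrac{2}{\mu^3}$ times the bracket in \eqref{Lopaq1}, and I treat its two summands separately.

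\emph{First summand.} The key simplification is $\sqrt{3}\,h_0\theta_0=-\mu$. Together with $h_0^2=4/(27J_0^2)$, this gives
\[
2h_0\tilde{\eta}\big(\sqrt{3}\theta_0\mu^2\tilde{\eta}^2+(3\mu h_0J_0^2-\sqrt{3}h_0^2\theta_0)\xi^2\big)
=2\tilde{\eta}\Big(-\mu^3\tilde{\eta}^2+\big(\tfrac{4\mu}{9}+\tfrac{4\mu}{27J_0^2}\big)\xi^2\Big).
\]
Here the inner coefficient comes from $3\mu h_0^2J_0^2=4\mu/9$ and $-\sqrt{3}h_0^3\theta_0=\mu h_0^2$. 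Multiplying by $2/\mu^3$ and substituting $\tilde{\eta}^2=\lambda^2/\mu+\xi^2$ gives $-4\tilde{\eta}\big(\lambda^2/\mu+\delta_1\xi^2\big)$, with $\delta_1$ exactly as in \eqref{defdeltas}.

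\emph{Second summand.} Using $3\theta_0=\tfrac{9}{2}\mu J_0$, the term $\tfrac{2}{\mu^3}\cdot 3\lambda\theta_0 g_0(\tilde{\eta}^2\mu^2-h_0^2\xi^2)$ equals $9J_0g_0\lambda\big(\tilde{\eta}^2-h_0^2\xi^2/\mu^2\big)$. Substituting for $\tilde{\eta}^2$ and $h_0^2$ turns this into $9J_0g_0\lambda(\lambda^2/\mu+\delta_2\xi^2)$. Finally, writing $\tilde{\eta}=\sqrt{\lambda^2/\mu+\xi^2}$ on the principal branch, as in \eqref{eigvec1}, yields \eqref{Lopaq2}.

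The only delicate point is bookkeeping of signs and powers of $\sqrt{3}$ and $J_0$ in the first summand. The identity $\sqrt{3}h_0\theta_0=-\mu$ is what absorbs them and produces the clean coefficients $\delta_1$ and $\delta_2$.
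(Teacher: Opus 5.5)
Your proposal is correct and follows essentially the paper's route: both start from the closed form in the last line of \eqref{Lopaq1}, eliminate $\tilde{\eta}^2$, $\theta_0$ and $h_0$ via $\tilde{\eta}^2=\lambda^2/\mu+\xi^2$, $\theta_0=\tfrac{3}{2}\mu J_0$ and $h_0=-2/(3\sqrt{3}J_0)$, and then factor. The identity $\sqrt{3}h_0\theta_0=-\mu$ is exactly what the paper uses to turn $2h_0\mu^2\theta_0\sqrt{3}$ into $-2\mu^3$, and the only difference from the paper is the cosmetic order in which you normalize and substitute.
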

\begin{proof}
First, we eliminate the quadratic term $\eta^2\mu^2$ by substituting  $\mu^2\tilde{\eta}^2=\mu\lambda^2+\mu^2\xi^2$ (see \eqref{eigvec1}) into the Lopatinski\u{\i} determinant \eqref{Lopaq1}. Simplifying and rearranging terms we arrive at
\[
\begin{aligned}
\Delta(\lambda,\xi)&=-\frac{4}{3 \theta_0} \left(2 \tilde{\eta} h_0 \big(\mu\theta_0\sqrt{3}\lambda^2+ (\theta_0\mu^2\sqrt{3}+ 3\mu h_0J_0^2 - \theta_0 h_0^2\sqrt{3}) \xi^2\big)\right.\\
   &\qquad\qquad\quad+3 \lambda \theta_0 g_0 \big( \mu\lambda^2+(\mu^2-h_0^2) \xi^2\big)\Big)\\
  &= -\frac{4}{3 \theta_0} \left(2  h_0 \mu^2\theta_0\sqrt{3}\Big(\dfrac{\lambda^2}{\mu}+ \delta_1 \xi^2\Big)\tilde{\eta}+3\lambda  \theta_0 g_0\mu^2 \Big( \dfrac{\lambda^2}{\mu}+\delta_2\xi^2\Big)\right),
\end{aligned}
\]
where
$$\delta_1=\frac{\theta_0\mu^2\sqrt{3}+ 3\mu h_0J_0^2 - \theta_0 h_0^2\sqrt{3}}{\theta_0\mu^2\sqrt{3}}, \quad \delta_2=\frac{\mu^2-h_0^2}{\mu^2}.$$
Upon substitution of $\theta_0=\frac{3\mu J_0}{2}$, $h_0=\tfrac{-2}{3J_0\sqrt{3}}$, one obtains the Lopatinski\u{\i} function in terms of the material parameters $\mu, J_0$: 
\[
\begin{aligned}
\Delta(\lambda,\xi)&= -\frac{4}{3 \theta_0} \left(-2\mu^3\Big(\dfrac{\lambda^2}{\mu}+ \delta_1 \xi^2\Big)\tilde{\eta}+9J_0g_0\frac{\mu^3}{2} \lambda \Big( \dfrac{\lambda^2}{\mu}+\delta_2\xi^2\Big)\right),
\end{aligned}
\]
where
$$\delta_1=1-\frac{4}{27J_0^2\mu^2}-\frac{4}{9\mu^2}, \quad \delta_2=1-\frac{4}{27J_0^2\mu^2}.$$
Substituting \eqref{eigvec1} and taking $\mu^3/2$ as common factor we arrive at the desired expression $\Delta=-\frac{2\mu^3}{3\theta_0}\widehat{\Delta}$. The Lemma is proved.
\end{proof}

As a first consequence of the expression for the Lopatinski\u{\i} determinant \eqref{Lopaq2} we have the following Corollary.

\begin{corollary}[one-dimensional stability]
\label{cor1dstab}
The (static) phase boundary $(\bFu,\bm{0},0,\eu)$ is uniformly stable with respect to one-dimensional perturbations in compressible Hadamard materials characterized by energy density functions of the form \eqref{Hadamardmat0} - \eqref{hdef1}. More precisely, the associated Lopatinski\u{\i} determinant \eqref{Lopaq2} behaves for $\xi = 0$ as
\[
 \widehat{\Delta}(\lambda,0) =  \Big(\frac{-4+9J_0g_0\sqrt{\mu}}{\mu\sqrt{\mu}} \Big)\lambda^3 \neq 0,
\]
for any $(\lambda,0)  \in \Gamma^+$.
\end{corollary}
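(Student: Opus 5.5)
Set $\xi=0$ in the normalized formula \eqref{Lopaq2} of Lemma \ref{lelopa1}. With $\xi=0$ the frequency $\tilde{\eta}$ reduces to $\sqrt{\lambda^2/\mu}$. The first step is to identify this principal square root. For $(\lambda,0)\in\Gamma^+$ we have $|\lambda|=1$ and $\Re\lambda>0$, so $\lambda^2/\mu$ does not lie on the closed negative real axis. Hence its principal square root is the unique root with positive real part, namely $\lambda/\sqrt{\mu}$, because $\Re(\lambda/\sqrt\mu)>0$.

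Substituting this gives
\[
\widehat{\Delta}(\lambda,0)=-4\frac{\lambda^2}{\mu}\cdot\frac{\lambda}{\sqrt{\mu}}+9J_0g_0\frac{\lambda^3}{\mu}=\frac{-4+9J_0g_0\sqrt{\mu}}{\mu\sqrt{\mu}}\,\lambda^3,
\]
which is the claimed formula. The coefficients $\delta_1,\delta_2$ drop out because they multiply $\xi^2$.

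The second step is nonvanishing. Since $\lambda\neq0$ on $\Gamma^+$, it suffices to show $-4+9J_0g_0\sqrt\mu\neq0$. Here $g_0=D_s\tilde g|_{s=0}$ equals $0$ under the Maxwell rule, by \eqref{g1Deriv1}. Under an Abeyaratne--Knowles rule it is strictly negative by \eqref{AbeKR}, for instance $g_0=-\varepsilon$ as in \eqref{g1Deriv1AN}. Together with $J_0>1$ and $\mu>0$, this makes the coefficient at most $-4<0$.

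For uniform stability, the same argument extends to the boundary point $\Re\lambda=0$, $\xi=0$, i.e. $\lambda=\pm\ii$. There the formula for $\tilde\eta$ is understood by continuous extension from $\Gamma^+$, which gives $\tilde\eta=\lambda/\sqrt\mu$. The expression is still $c\lambda^3$ with $c\neq0$ and $|\lambda|=1$, so the determinant does not vanish on all of $\Gamma\cap\{\xi=0\}$.

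The only delicate point is this branch identification for $\tilde\eta$, including the boundary case. It is the step I would state carefully: $\tilde\eta$ must be the root consistent with the stable/unstable splitting $\Re\tilde\eta>0$ used to construct \eqref{stable}–\eqref{unstable}.
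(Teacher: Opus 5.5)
Your proof is correct and is essentially the paper's argument: substitute $\xi=0$ into \eqref{Lopaq2}, note $\lambda\neq0$, and use $g_0=0$ (Maxwell) or $g_0<0$ (Abeyaratne--Knowles) to see that $-4+9J_0g_0\sqrt{\mu}\le -4$. Your identification $\tilde{\eta}=\lambda/\sqrt{\mu}$ and its continuous extension to $\lambda=\pm\ii$ go beyond the paper, which checks only $\Gamma^+$ while asserting uniform stability; reading $\tilde{\eta}$ by continuity is essential there, because the literal principal branch at $\lambda=-\ii$ would give $\tilde{\eta}=\ii/\sqrt{\mu}$ and a spurious zero when $9J_0\varepsilon\sqrt{\mu}=4$.
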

\begin{proof}
Set $\xi=0$ and note that for every $(\lambda,0)\in\Gamma^+$, there holds that $\lambda\neq0$. Hence, a direct substitution into \eqref{Lopaq2} yields
\[
\widehat{\Delta}(\lambda,0) =  \Big(\frac{-4+9J_0g_0\sqrt{\mu}}{\mu\sqrt{\mu}} \Big)\lambda^3,
\]
which does not vanish, because for the Maxwell kinetic rule we have $\tilde{g}\equiv0$ and then $g_0=D_s \tilde{g}|_{s=0}=0$. In the case of the Abeyaratne-Knowles rule we have $g_0<0$ (see \eqref{AbeKR}) and, once again, $\widehat{\Delta}(\lambda,0) \neq 0$. This shows the result.
\end{proof}

\subsection{The Maxwell rule}

In this Subsection, we analyze the Lopatinski\u{\i} determinant \eqref{Lopaq2} under the Maxwell (or Hugoniot) rule, corresponding to $\tilde{g}\equiv0$. Since $g_0=D_s \tilde{g}|_{s=0}=0$, the Lopatinski\u{\i} determinant \eqref{Lopaq2} takes the form
 \begin{equation}\label{Lopg0}
\widehat{\Delta}(\lambda,\xi)=-4\Big(\dfrac{\lambda^2}{\mu}+ \delta_1 \xi^2\Big)\sqrt{\dfrac{\lambda^2}{\mu}+ \xi^2}.
\end{equation}
It is easy to see that if $\delta_1\geq0$ then all roots of $\widehat{\Delta}$ have the form 
\begin{equation}\label{rootD1}
(\ii\rho\xi,\xi),\qquad \xi\in\R,
\end{equation}
where $\rho=\pm\sqrt{\mu}$ or $\rho=\pm\sqrt{\delta_1\mu}$. Therefore, $\widehat{\Delta}$ does not vanish for $\Re\lambda>0$, given that all its roots  are purely imaginary in $\lambda$. In particular, $\widehat{\Delta}$ does not vanish in $\Gamma^+$ but it does vanish in $\Gamma$ (at purely imaginary values in $\lambda$). Indeed,  it easy to verify  that the root $(\ii\rho\xi_0,\xi_0)\in\Gamma\setminus\Gamma^+$ for 
\[
\xi_0:=\pm\frac{1}{\sqrt{1+\rho^2}}.
\]

This implies that, under the Maxwell rule with $\delta_1\geq0$, $\widehat{\Delta}$ satisfies the Lopatinski\u{\i} condition (but not uniformly), and therefore the phase boundary $(\bFu,\bm{0},0,\eu)$ is weakly stable as long as $\delta_1\geq0$. Conversely,  when $\delta_1<0$, $\widehat{\Delta}$ admits not only  purely imaginary roots in $\lambda$,  but also roots with $\lambda$ real and positive of the form $$\big(\sqrt{-\delta_1\mu}\:\xi,\xi\big),\qquad \xi>0,$$ 
which arise from the second-order polynomial multiplying the square root in \eqref{Lopg0}. As in the last case,  one may find a value of $\xi$ such that the associated root lies in $\Gamma^+$; that is, $\widehat{\Delta}$ vanishes for at least one point in $\Gamma^+$, implying that the phase boundary is strongly unstable. Thus, we have proved the following Lemma.

\begin{lemma}
\label{PreCrit1}
Consider the (static) phase boundary $(\bFu,\bm{0},0,\eu)$ (of the form \eqref{shock}) in a compressible Hadamard material characterized by \eqref{Hadamardmat0} and \eqref{hdef1}, with $J_0>1$, $\mu>0$. Let $\delta_1 = \delta_1(\mu,J_0)$ be the parameter defined in \eqref{defdeltas}. Then the associated Lopatinski\u{\i} determinant under the Maxwell rule \eqref{Lopg0} behaves as follows:
\begin{itemize}
\item [\rm{(i)}] If $\delta_1 \geq0$ then $\widehat{\Delta}$ does not vanish in $\Gamma^+$ but does vanish in $\Gamma$ (the phase boundary is weakly stable).
\item [\rm{(ii)}] If $\delta_1<0$ then $\widehat{\Delta}$ vanishes in at least one point in $\Gamma^+$ (the phase boundary is strongly unstable)
\end{itemize}
\end{lemma}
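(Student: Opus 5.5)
The plan is to work directly with the factored form \eqref{Lopg0} of $\widehat{\Delta}$ obtained from Lemma \ref{lelopa1} with $g_0=0$. On $\Gamma$ the function $\widehat{\Delta}$ vanishes exactly when one of two factors vanishes: the quadratic factor $P(\lambda,\xi):=\lambda^2/\mu+\delta_1\xi^2$, or the square root factor $\tilde{\eta}=\sqrt{\lambda^2/\mu+\xi^2}$. So the proof reduces to locating the zeros of each factor separately within $\Gamma$ and within $\Gamma^+$.

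For the square root factor, $\tilde{\eta}=0$ forces $\lambda^2=-\mu\xi^2$, that is $\lambda=\pm\ii\sqrt{\mu}\,\xi$. These zeros are purely imaginary, so they never lie in $\Gamma^+$, for any sign of $\delta_1$. I would still record the point $(\ii\sqrt{\mu}\xi_0,\xi_0)$ with $\xi_0=1/\sqrt{1+\mu}$. It lies in $\Gamma\setminus\Gamma^+$ and is a zero regardless of $\delta_1$, so in case (i) $\widehat{\Delta}$ always vanishes somewhere on $\Gamma$.

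For the quadratic factor, $P=0$ means $\lambda^2=-\delta_1\mu\xi^2$.
\begin{itemize}
\item[(i)] If $\delta_1\ge 0$, the roots are $\lambda=\pm\ii\sqrt{\delta_1\mu}\,\xi$, again with $\Re\lambda=0$. When $\xi=0$ the factor reduces to $\lambda^2/\mu\neq0$ on $\Gamma$, since then $|\lambda|=1$. Hence $\widehat{\Delta}\neq0$ on $\Gamma^+$, which together with the zero found above gives (i).
\item[(ii)] If $\delta_1<0$, I take $\lambda=\sqrt{-\delta_1\mu}\,\xi$ with $\xi>0$. The normalization $|\lambda|^2+\xi^2=1$ then fixes $\xi=1/\sqrt{1-\delta_1\mu}$, giving a point with $\Re\lambda>0$, that is, a point of $\Gamma^+$ where $P=0$, and hence $\widehat{\Delta}=0$.
\end{itemize}

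The only delicate point is the branch of the square root. One must make sure the factorization \eqref{Lopg0} is legitimate, with $\tilde{\eta}$ the principal branch continuous on $\Gamma$, and that in case (ii) the zero of $P$ is not cancelled by a singularity of the other factor. This is harmless because $\tilde{\eta}$ is finite everywhere; at the point in (ii) the argument $\lambda^2/\mu+\xi^2=(1-\delta_1)\xi^2$ is positive, so $\tilde{\eta}>0$ there. Finally, I would translate the conclusions into weak stability and strong instability via the definitions recalled in Section \ref{secstabtheory}.
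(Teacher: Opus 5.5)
Your proposal is correct and follows essentially the same route as the paper. Both factor \eqref{Lopg0} and observe that for $\delta_1\ge 0$ every zero of either factor has the form $(\ii\rho\xi,\xi)$ with $\rho\in\{\pm\sqrt{\mu},\pm\sqrt{\delta_1\mu}\}$, so none lies in $\Gamma^+$, while $\xi_0=1/\sqrt{1+\mu}$ gives one in $\Gamma\setminus\Gamma^+$. For $\delta_1<0$ both exhibit the real root $\lambda=\sqrt{-\delta_1\mu}\,\xi$, normalized into $\Gamma^+$.

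One wording fix: the principal branch of $\tilde{\eta}$ is not itself continuous on all of $\Gamma$, since it jumps across part of $\Re\lambda=0$, and $\widehat{\Delta}$ on $\Gamma$ is the continuous extension from $\Gamma^+$. This does not affect your conclusion, because any such extension satisfies $|\tilde{\eta}|=|\lambda^2/\mu+\xi^2|^{1/2}$, so the zero set you identify is unchanged.
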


For the sake of clarity, we express the last Lemma in terms of the original material parameters $\mu>0$ and $J_0>1$; we just have to determine the sign of $\delta_1=\delta_1(\mu,J_0)$, defined in \eqref{defdeltas}, as a function of $\mu$, $J_0$. To that end, let us consider the function  $\mu \mapsto \delta_1(\mu)$ for $J_0\in[1,\infty)$ fixed. It is a strictly increasing real function on $(0,\infty)$, given that its derivative 
 \[
 \delta_1'=\frac{d}{d\mu}\delta_1=\frac{8}{9\mu^3}+\frac{8}{27J_0^2\mu^3},
 \]
 is positive for all $\mu>0$. Furthermore, one finds that $\delta_1$ admits a unique positive zero
 \begin{equation}
 \label{mu:barr}
 \mu = \mu_*:=\frac{2}{3}\sqrt{1+\frac{1}{3J_0^2}}.
 \end{equation}
Therefore, $\delta_1<0$ for all $0<\mu< \mu_*$ by monotonicity; hence, Lemma \ref{PreCrit1} (ii) ensures that $\widehat{\Delta}$ vanishes at some point in $\Gamma^+$, implying that the phase boundary is strongly unstable. On the other hand,  if $\mu\geq \mu_*$ then $\delta_1\geq0$, so $\widehat{\Delta}$ vanishes on $\Gamma$ only at pure imaginary roots in $\lambda$ and then the phase boundary is weakly stable.  As a consequence,  $\mu_*$ turns out to be a transition point from strong instability to weak stability. We summarize our findings into the following main result.

\begin{theo}[stability criteria under the Maxwell rule]\label{MaxSta}
Consider the (static) phase boundary $(\bFu,\bm{0},0,\eu)$ in a compressible Hadamard material defined by \eqref{Hadamardmat0}, \eqref{hdef1}, with $J_0>1$, $\mu>0$.
\begin{itemize}
\item[\rm{(i)}] If $J_0>1$ and $0<\mu<\mu_*$ then $\delta_1<0$ and the phase boundary is  \emph{strongly unstable}.
\vspace{.2cm}
\item[\rm{(ii)}] If $J_0>1$ and $\mu\geq\mu_*$ then $\delta_1\geq0$ and the phase boundary  is \emph{weakly stable}.
\end{itemize}
\end{theo}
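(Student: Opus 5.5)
The plan is to reduce the theorem to Lemma \ref{PreCrit1} by determining the sign of $\delta_1=\delta_1(\mu,J_0)$ in terms of $\mu$ and $J_0$, and then to check the root claims of that lemma directly on the Maxwell form \eqref{Lopg0}. With $g_0=0$, the normalized determinant $\widehat{\Delta}$ from Lemma \ref{lelopa1} factors as $-4\,P(\lambda,\xi)\,\tilde{\eta}(\lambda,\xi)$, where $P=\lambda^2/\mu+\delta_1\xi^2$.

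First I treat the sign of $\delta_1$. Multiplying \eqref{defdeltas} by $\mu^2>0$ gives
\[
\mu^2\delta_1=\mu^2-\tfrac{4}{9}\Big(1+\tfrac{1}{3J_0^2}\Big)=\mu^2-\mu_*^2 .
\]
So $\delta_1<0$ exactly when $0<\mu<\mu_*$, and $\delta_1\ge 0$ exactly when $\mu\ge\mu_*$. This is simply the monotonicity argument preceding the theorem written in closed form.

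Next, part (ii), where $\delta_1\ge0$. On $\Gamma^+$ the factor $\tilde{\eta}$ never vanishes: $\tilde{\eta}=0$ would force $\lambda^2=-\mu\xi^2\le0$, so $\lambda$ would be purely imaginary, which contradicts $\Re\lambda>0$. Likewise $P=0$ would force $\lambda^2=-\delta_1\mu\xi^2\le 0$, again making $\lambda$ purely imaginary (or zero, which is excluded on $\Gamma^+$ unless $\xi=0$, and then the normalization gives $|\lambda|=1$). Hence $\widehat{\Delta}\neq0$ on $\Gamma^+$, i.e. weak stability. To see that uniform stability fails, I exhibit a zero on $\partial\Gamma^+$, namely $(\lambda,\xi)=(\ii\sqrt{\mu}\,\xi_0,\xi_0)$ with $\xi_0=(1+\mu)^{-1/2}$, where $\tilde{\eta}=0$.

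Finally, part (i), where $\delta_1<0$. I take $\lambda=\sqrt{-\delta_1\mu}\,\xi>0$ with $\xi=(1-\delta_1\mu)^{-1/2}$; this point lies in $\Gamma^+$ and satisfies $P=0$. Since $\widehat{\Delta}=-\tfrac{2\mu^3}{3\theta_0}\cdot(\text{nonzero constant})\cdot\Delta$, it is also a zero of $\Delta$, so the boundary is strongly unstable.

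The one delicate step is the branch and continuity issue for $\tilde{\eta}$ on $\Gamma\setminus\Gamma^+$. There the square root in \eqref{eigvec1} should be understood as the continuous extension of the principal branch from $\Gamma^+$, and the zero at $\tilde{\eta}=0$ is not affected by the choice of branch. Everything else is direct substitution.
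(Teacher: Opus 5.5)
Your proposal is correct and follows the paper's route: you reduce the theorem to Lemma \ref{PreCrit1} by locating the zeros of the two factors of the Maxwell determinant \eqref{Lopg0}, and your identity $\mu^2\delta_1=\mu^2-\mu_*^2$ is a closed-form version of the paper's monotonicity argument in $\mu$. The only slip is cosmetic: the normalization is $\Delta=-\tfrac{2\mu^3}{3\theta_0}\widehat{\Delta}$ (you placed the constant on the wrong side), which does not matter because a nonzero multiple has the same zeros.
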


\subsection{Abeyaratne-Knowles kinetic rule}

In what follows, we adopt a kinetic relation of linear type, as first proposed by Abeyaratne and Knowles to describe irreversible processes close to thermodynamic equilibrium. It has the form
\begin{equation}\label{AbeKR0}
\tilde{g}=-\varepsilon s,\quad \varepsilon>0.
\end{equation}
Since $D_s\tilde{g}=-\varepsilon<0$, then \eqref{AbeKR0} satisfies trivially the assumptions \eqref{AbeKR}. In particular, we have $$g_0=D_s\tilde{g}\big|_{s=0}=-\varepsilon.$$ We now derive stability conditions for the phase boundary \eqref{shock} under the Abe\-ya\-ratne-Knowles rule \eqref{AbeKR0}. This amounts to looking for zeroes of $\widehat{\Delta}$ when $g_0=-\varepsilon<0$. Since $\widehat{\Delta}$ is homogeneous of order three in $(\lambda,\xi)$, we have from \eqref{Lopaq2} that
\[
\widehat{\Delta}(\mu^{1/2}\lambda,\xi)=\xi^{3}\widehat{\Delta}(\mu^{1/2}\lambda/\xi,1)=\xi^{3}\widehat{\Delta}(z,1),
\]
where $z:=\mu^{1/2}\lambda/\xi$. Thus, for $\xi\neq0$ (see Corollary \ref{cor1dstab} for the case $\xi=0$), the Lopatinski\u{\i} determinant \eqref{Lopaq2} vanishes if and only if $\widehat{\Delta}(z,1)$ does. So, without loss of generality, we look for roots of
 \begin{equation}
 \label{Gfun1}
 G(z):=\widehat{\Delta}(z\sqrt{\mu},1)=-4(z^2+\delta_1)\sqrt{z^2+1}-\delta z(z^2+\delta_2),
 \end{equation}
where $\delta=9J_0\varepsilon\sqrt{\mu}>0$. Notice that in the limit when $\varepsilon\to0$ one recovers the Maxwell rule, under which the phase boundary \eqref{shock} is strongly unstable for $\delta_1<0$. Observe that, in this case ($\delta_1<0$), the phase boundary also remains strongly unstable under the Abeyaratne-Knowles rule, because $G$ necessarily admits at least a  positive real  zero for all $\varepsilon>0$. Indeed, $G$ is continuous and real valued along the positive real axis, with $G(0)=-4\delta_1>0$ and $$G(\sqrt{-\delta_1})=-\delta\sqrt{-\delta_1}(-\delta_1+\delta_2),$$ which is negative since from \eqref{Lopaq2}, we have $\delta_1<\delta_2<1$ for all $\mu>0$ and $J_0>1$. The intermediate value theorem ensures the existence of at least one root of $G$ on $(0,\sqrt{-\delta_1})$. This, in turn, implies that $\widehat{\Delta}$ vanishes in $\Gamma^+$.   

Therefore, for the analysis to come we assume $\delta_1\geq0$. In this case, we have that 
 \begin{equation}\label{indel1}
 0\leq\delta_1<\delta_2<1,
 \end{equation} 
 for all material parameters $\mu>0$ and $J_0\geq1$. 
 
 First,  we proceed with the investigation of possible zeroes of the Lopatinski\u{\i} determinant along the imaginary axis, which are associated to the existence of surface waves. 
Let us consider a zero of $G$ of the form $z = \ii t$, $t\in\R$. Let us define $\widetilde{G}(t):=G(\ii t)$ and we investigate whether $\widetilde{G}$ has real zeros. Let us first consider $t\in[-1,1]$. In this case we can write
 \begin{equation}
 \label{tildeG0}
 \widetilde{G}(t)=-4(\delta_1-t^2)\sqrt{1-t^2} - \ii \delta t (\delta_2-t^2).
 \end{equation}
 $\Re \widetilde{G}(t)$ vanishes at $t=\pm1$ or  $t=\pm\sqrt{\delta_1}$. Substituting into the imaginary part one obtains 
 \[
 \Im G(t)\big|_{t=\pm1}=\mp\delta(\delta_2-1)\neq 0, \quad \text{and} \quad \Im G(t)\big|_{t=\pm\sqrt{\delta_1}}=\mp\delta \sqrt{\delta_1}(\delta_2-\delta_1).
 \]
In view of \eqref{indel1} and $\delta>0$, we have that if $\delta_1>0$ then the imaginary part of $\widetilde{G}$ never vanishes when the real part does. So, we conclude that  $\widetilde{G}$ does not vanish on the interval $[-1,1]$ when $\delta_1>0$.  However if $\delta_1=0$, both the real and imaginary parts of $\widetilde{G}$ vanish simultaneously if and only if  $t=0$, implying that $t=0$ is the unique root of $\widetilde{G}$ in the interval $[-1,1]$. Now suppose that  $|t|>1$. In this case we have $\sqrt{-t^2+1}=\ii\,\sgn(t)\sqrt{t^2-1}$ and hence
 \begin{equation}
 \label{tildeG}
  \widetilde{G}(t)=\ii\Big[4\ (t^2-\delta_1)\,\sgn(t)\sqrt{t^2-1}+\delta t(t^2-\delta_2)\Big].
 \end{equation} 
 We immediately observe that since $\delta_1<1<t^2,\: \delta_2<1<t^2$ then both terms inside the brackets in \eqref{tildeG} are strictly positive if $t>1$ and strictly negative if $t<-1$. This shows that $\widetilde{G}(t)$ does not vanish for $|t|>1$. For the analysis to come, it is convenient to look for zeros of $G(z)$ along the positive real axis. Set $z=t>0$ and substitute into \eqref{Gfun1} to obtain
 \begin{equation}
 \label{Gpost}
G(t) =-\Big(4(t^2+\delta_1)\sqrt{t^2+1}+\delta t (t^2+\delta_2)\Big).
\end{equation}
It is clear that $G$ is strictly negative along the positive real axis, in view that $\delta>0$ and $\delta_1>0$. We summarize our last findings as follows.

 \begin{lemma}[existence of positive real and purely imaginary zeroes]
 \label{imzero1}
 If $\delta_1>0$ then the mapping $z \mapsto G(z)$ does not have positive real or purely imaginary zeroes. When $\delta_1=0$ the unique root of $G(z)$ on the imaginary axis is $z=0$ and there are not positive real zeroes.
 \end{lemma}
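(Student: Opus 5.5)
The lemma collects the computations carried out just above, so the proof organizes them into two cases according to the location of $z$. Throughout, $\delta=9J_0\varepsilon\sqrt{\mu}>0$, and by \eqref{indel1} we have $0\le\delta_1<\delta_2<1$. The square root in \eqref{Gfun1} is the principal branch, continued to the imaginary axis as in the definition of $\Delta$ on $\Gamma$. On the imaginary axis this gives $\sqrt{1-t^2}\ge 0$ for $|t|\le1$ and $\sqrt{1-t^2}=\ii\,\sgn(t)\sqrt{t^2-1}$ for $|t|>1$.

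\emph{Positive real axis.} Set $z=t>0$. Then $G(t)$ takes the form \eqref{Gpost}, i.e. minus a sum of two terms. The first term is $4(t^2+\delta_1)\sqrt{t^2+1}$, which is positive since $t^2>0$ and $\delta_1\ge0$. The second term is $\delta t(t^2+\delta_2)$, which is positive since $\delta>0$ and $\delta_2>0$. Hence $G(t)<0$, and this holds for both $\delta_1>0$ and $\delta_1=0$. So in neither case are there positive real zeroes.

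\emph{Imaginary axis, $|t|\le1$.} Write $z=\ii t$ with $t\in\R$ and $\widetilde G(t)=G(\ii t)$. For $|t|\le 1$, $\widetilde G$ has the form \eqref{tildeG0}, where $\Re\widetilde G$ and $\Im\widetilde G$ are given by the two separate real expressions. A zero requires the real part $-4(\delta_1-t^2)\sqrt{1-t^2}$ to vanish, which happens only at $t=\pm1$ or $t=\pm\sqrt{\delta_1}$. We then evaluate the imaginary part $-\delta t(\delta_2-t^2)$ at these points.
\begin{itemize}
\item At $t=\pm1$ it equals $\mp\delta(\delta_2-1)\neq0$, because $\delta_2<1$.
\item At $t=\pm\sqrt{\delta_1}$ it equals $\mp\delta\sqrt{\delta_1}(\delta_2-\delta_1)$. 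This is nonzero when $\delta_1>0$, because $\delta_2>\delta_1$. When $\delta_1=0$ it vanishes exactly at $t=0$.
\end{itemize}
Hence for $\delta_1>0$ there is no zero on this segment, while for $\delta_1=0$ the only zero is $z=0$, where indeed $G(0)=-4\delta_1=0$.

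\emph{Imaginary axis, $|t|>1$.} Here we use the form \eqref{tildeG}: $\widetilde G(t)=\ii[\,\cdots]$. Since $t^2>1>\delta_1$ and $t^2>1>\delta_2$, the two summands $4(t^2-\delta_1)\sgn(t)\sqrt{t^2-1}$ and $\delta t(t^2-\delta_2)$ are both nonzero and both carry the sign of $t$. Their sum therefore never vanishes.

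Combining the three pieces gives both assertions of the lemma. The only delicate point is the branch choice of $\sqrt{z^2+1}$ on the imaginary axis for $|t|>1$. This is fixed by continuity from $\Re z>0$: for $z=\ii t+\epsilon$ with $\epsilon\to0^+$, $z^2+1=1-t^2+2\ii\epsilon t+\epsilon^2$, whose imaginary part has the sign of $t$. The principal root therefore tends to $\ii\,\sgn(t)\sqrt{t^2-1}$, which justifies \eqref{tildeG}. Once the branch is fixed, everything reduces to the sign bookkeeping above, which uses only \eqref{indel1} and $\delta>0$.
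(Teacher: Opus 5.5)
Your proof is correct and follows the paper's argument essentially step for step. The case split is the same: the sign of $G(t)$ on the positive real axis, then matching the zeros of the real and imaginary parts of $G(\ii t)$ for $|t|\le 1$, then the common sign of both summands for $|t|>1$. You add two small things the paper leaves implicit: you justify the branch $\sqrt{1-t^2}=\ii\,\sgn(t)\sqrt{t^2-1}$ as the limit from $\Re z>0$, and you note that the positive-real-axis argument needs only $\delta_1\ge 0$, so it also covers the case $\delta_1=0$.
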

 The remaining task is to determine whether there exist zeroes of $G$ in the right complex half-plane, $\Re z>0$. Since $\delta_1,\delta_2,\delta$ are real, and the principal branch of the squared root satisfies
 \[
 \sqrt{\:\overline{z}\:}=\overline{\sqrt{z}},\quad z\notin\R^{-},
 \] 
then it easy to see that $G(\overline{z})=\overline{G(z)}$. Therefore, the zeroes of $G$ lie symmetrically with respect to the real axis as complex conjugate pairs. So, in order to count the zeroes of $G$ in the considered region, it is enough to count them on the first complex quadrant 
\[
\Ic_{+}:=\{z\in\C:\:\Re z>0, \; \Im z>0\}.
\]
We apply the argument principle to count the number of roots of $G$ in $\Ic_{+}$. We analyze the cases $\delta_1>0$ and $\delta_1=0$ separately.

\subsection*{The case $\delta_1>0$}
Consider $G(z)$ as $z$ varies counterclock-wise along the closed contour $\mathcal{C}_R$ consisting of a quarter of a circle of radius $R$ together with two straight segments--one horizontal and one vertical--connecting the end points of the arc to the center (the origin); see Figure \ref{figcontourC}. From Lemma \ref{imzero1}, it is known that if $\delta_1>0$ then $G$ has neither purely imaginary roots nor positive real roots,  and we only have to avoid the branch cut of the square root when we map the portion of the positive imaginary axis. We are interested in the behavior of the image of $\mathcal{C}_R$ under $G$ as $R\to\infty$. First, we examine the mapping of the  portion of $\mathcal{C}_R$ on the real axis, that is, we evaluate $G$ when $z=t$, $t>0$.  Equation \eqref{Gpost} shows that the function $t\to-G(t)$ is the sum of two strictly increasing real-valued functions on the positive real axis. So, $G$ strictly decreases from $G=-4\delta_1$ at $t=0$ to $G=-\infty$ as $t\to\infty$.  Thus, the image of the horizontal portion $[0,R]$, of $\mathcal{C}_R$ under $G$ is the interval $\big[G(R),-4\delta_1\big]$ along the negative real axis, with $G(R)\to -\infty$ as $R \to \infty$. On the other hand, from \eqref{tildeG0} and \eqref{tildeG} we have that $G$ along the positive imaginary axis becomes
\[
G(\ii t)=\widetilde{G}(t)= - \ii \delta t(\delta_2-t^2) -4(\delta_1-t^2) \begin{cases}  \ii \sqrt{t^2-1}, & t > 1, \\ \sqrt{1-t^2}, & 0\leq t \leq 1. \end{cases}.
\]

\begin{figure}[t]
\begin{center}
\subfigure[Contour $\mathcal{C}_R$]{\label{figcontourC}\includegraphics[scale=.7, clip=true]{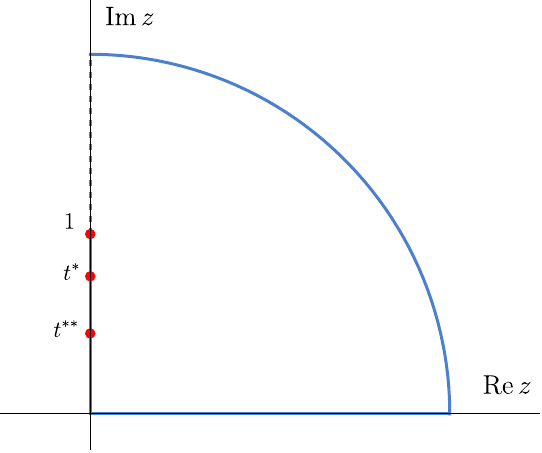}}\quad\qquad
\subfigure[$G(\mathcal{C}_R)$]{\label{figcontourGC}\includegraphics[scale=.9, clip=true]{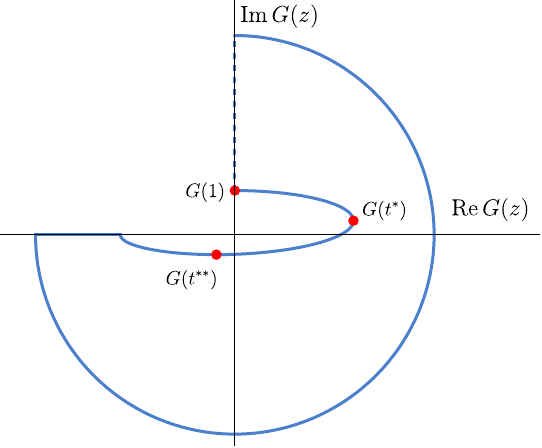}}
\end{center}
\caption{Illustration of the contour $\mathcal{C}_R$ in the $z$-complex plane (in blue; panel (a)) and of its image under the mapping $G$ (panel (b); color online).}\label{figcontour}
\end{figure}

Hence, there are two cases to consider when mapping the positive imaginary axis. If $t>1$, notice that $G$ takes purely imaginary values. That is 
\[
G(\ii t)=\widetilde{G}(t)=\ii q(t),
\] 
where $q$ is a real-valued function that, by virtue of the previous argument, strictly increases for $t>1$ starting from the value  $(1-\delta_2)\delta$ at $t=1$ up to $\infty$ as $t$ approaches infinity. Thus, $G$ maps the segment $[\ii,\ii R]$ ($R>1$) into the interval $\big[\ii(1-\delta_2)\delta,\ii q(R)\big)$. 
On the other hand, if $0<t\leq1$ then it is not hard to verify that the function 
\[
t \mapsto \Re(G(\ii t))=4(t^2-\delta_1)\sqrt{1-t^2},
\]
which increases from $-4\delta_1$ (at $t=0$) up to a root at $t=\sqrt{\delta_1}$, reaches its unique maximum  at $t = t^*:=\sqrt{(2+\delta_1)/3}\in(0,1)$ and finally vanishes again at $t=1$. Meanwhile,  the imaginary part
\[
t\to \Im(G(\ii t))=\delta t(t^2-\delta_2),
\] 
is a convex real function of  $t\in [0,1]$ with a negative minimum attained at $t=t^{**}:=\sqrt{\delta_2/3}\in(0,1)$ and roots at $t=\sqrt{\delta_2}>t^{**}$ and $t=0$. Consequently, $G$ maps the segment $\ii [0,1]$ along the imaginary axis into a pseudo-circular arc surrounding the origin, with end points $G(0)=-4\delta_1$, $G(1)=\delta(1-\delta_2)\ii$, and  extreme points $G(t^*)$,  $G(t^{**})$; see Figure \ref{figcontourGC}. Finally, we examine the mapping of the circular portion of $\mathcal{C}_R$. To that end, we proceed as in \cite{JL}, introducing polar coordinates $(R,\phi)$ and defining the function
\[
H(R,\phi):=G(w),\quad w=Re^{\ii \phi}.
\]
From expression \eqref{Gfun1}, we notice that the image of the circular portion for large $R$ behaves like
\[
H(R,\phi) \approx (-4-\delta)w^3=-(4+\delta) R^3 e^{\ii 3\phi},
\]
as $R \to \infty$. Hence, the image is almost a circular portion (three-quarters) too, with negative orientation.

From the preceding discussion, we conclude that the origin $w=0$ in the codomain lies outside of the image of the contour $G(\mathcal{C}_R)$, as illustrated in Figure \ref{figcontourGC}. This implies that there is no change in the argument of $G(z)$ as $z$ varies counter-clock wise around the closed contour $\mathcal{C}_R$ and that there are no roots of $G$ inside the contour $\mathcal{C}_R$. The argument can be applied to any arbitrary large radius $R>0$. Therefore, as long as $\delta_1>0$, $G$ does not vanish in $\Ic_{+}$, and then  $G$ has no roots with positive real part. This also implies $\widehat{\Delta}(\lambda,\xi)$ does not vanish for $\Re\lambda>0$. We conclude that, under the Abeyaratne-Knowles kinetic rule, $\delta_1>0$ is a sufficient condition for uniform (or strong) stability for the phase boundary $(\bFu,\bm{0},0,\eu)$.

\subsection*{The case $\delta_1=0$}

When $\delta_1=0$, the Lopatinski\u{\i} determinant takes the form $G(z)=-z^2G_1(z)$, where $G_1$ is given by
\[
G_1(z)=4\sqrt{z^2+1}+\delta\Big(z+\frac{\delta_2}{z}\Big).
\]
$G_1$ does not have zeroes in $\{\Re z>0\}$, since the real part of $G_1$ does not vanish whenever $\Re z>0$. Indeed, standard algebraic manipulations yield
\[
\Re \big( G_1(z)\big) = 4\Re\big(\sqrt{z^2+1}\big)+\delta(\Re z)\Big(1+\frac{\delta_2}{|z|^2}\Big),
\]
which is strictly positive for $\Re z>0$, provided that the square root in the first term is assumed to be the principal branch and then $\Re\big(\sqrt{z^2+1}\big)>0$ whenever $\Re z\neq0$. The unique root of $G$ in for $\Re z\geq0$ is $z=0$, which corresponds to roots of $\widehat{\Delta}$ of the form $(0,\pm1)\in\Gamma$. Thus, the phase boundary is weakly stable. Recall that from Theorem \ref{MaxSta}, $\delta_1=0$ for each $J_0>1$ if and only if $\mu=\mu_*$. 

We summarize our findings into the following main result.
\begin{theo}[stability criteria]\label{AbeySta}
Consider a compressible hyperelastic Hadamard material determined by the volumetric function \eqref{hdef1} with material parameters satisfying $\mu>0$, $J_0>1$  and the phase boundary $(\bFu,\bm{0},0,\eu)$.  Assuming the Abeyaratne-Knowles rule \eqref{AbeKR0}, we have the following stability criteria:

\begin{itemize}
\item [\rm{(i)}] If $J_0>1$ and $\mu>\mu_*$, which amounts to $\delta_1>0$,  then the phase boundary  is uniformly stable for all $\varepsilon>0$.
\item [\rm{(ii)}] If $J_0>1$ and $\mu=\mu_*$, which amounts to $\delta_1=0$, then the phase boundary is weakly stable for all $\varepsilon>0$.
\item [\rm{(iii)}] If $J_0>1$ and $0<\mu<\mu_*$, which amounts to $\delta_1<0$, then the phase boundary is strongly unstable for all $\varepsilon>0$.
 \end{itemize}
 \end{theo}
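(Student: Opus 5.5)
The proof will assemble the analysis already carried out for $G(z)$ in \eqref{Gfun1}. First I reduce the vanishing of $\widehat{\Delta}$ on $\Gamma$ to the vanishing of $G$. By homogeneity of degree three, for $\xi\neq0$ we have $\widehat{\Delta}(\lambda,\xi)=0$ iff $G(z)=0$ with $z=\mu^{-1/2}\lambda/\xi$ (after the rescaling used to define $G$). For $\xi=0$, Corollary \ref{cor1dstab} gives $\widehat{\Delta}(\lambda,0)\neq0$, since $g_0=-\varepsilon<0$ makes $-4+9J_0g_0\sqrt{\mu}<0$. Points of $\Gamma^+$ with $\xi\neq0$ correspond to $\Re z\neq 0$. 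By the conjugation symmetry $G(\bar z)=\overline{G(z)}$, together with the symmetry $\xi\mapsto-\xi$ (which sends $z\mapsto -z$ while preserving $\Re\lambda$), it suffices to treat $\Re z>0$, and then $\Ic_+$ plus the positive real axis. Points of $\Gamma\setminus\Gamma^+$ with $\xi\ne0$ correspond to $z\in\ii\R$. The dichotomy $\mu\gtrless\mu_*$ $\Leftrightarrow$ $\delta_1\gtrless0$ follows from the monotonicity of $\mu\mapsto\delta_1$ and formula \eqref{mu:barr}, exactly as in Theorem \ref{MaxSta}.

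For (iii), $\delta_1<0$: I use the intermediate value argument already given. $G$ is real on $(0,\infty)$, with $G(0)=-4\delta_1>0$ and $G(\sqrt{-\delta_1})=-\delta\sqrt{-\delta_1}(\delta_2-\delta_1)<0$ because $\delta_1<\delta_2$. This yields a real root $z_0>0$. It corresponds to $\lambda=\sqrt{\mu}\,z_0\xi>0$ for $\xi>0$, and normalizing $|\lambda|^2+\xi^2=1$ gives a zero in $\Gamma^+$, i.e.\ strong instability.

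For (i), $\delta_1>0$: Lemma \ref{imzero1} excludes zeros on $\ii\R$ (including $z=0$, since $G(0)=-4\delta_1\neq0$) and on the positive real axis. I then apply the argument principle on the quarter-disc contour $\mathcal{C}_R$, as sketched. The real segment maps into $(-\infty,-4\delta_1]$. The segment $[\ii,\ii R]$ maps into the positive imaginary axis. The segment $\ii[0,1]$ maps to a curve from $-4\delta_1$ to $\ii\delta(1-\delta_2)$. The arc behaves like $-(4+\delta)R^3e^{3\ii\phi}$. Hence the winding number of $G(\mathcal{C}_R)$ about $0$ vanishes, so $G$ has no zeros in $\Ic_+$. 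Combined with the nonvanishing on $\ii\R$, this shows that $\widehat{\Delta}$ has no zeros on all of $\Gamma$, which is uniform stability.

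The main obstacle is making this winding count rigorous. The image of $\ii[0,1]$ passes through the third, fourth and first quadrants. One must check that, together with the other pieces, it does not encircle the origin; merely avoiding the origin is not enough. I would argue by tracking the argument continuously: along $\ii[0,1]$ it goes from $\pi$ down through $-\pi/2$ to $\pi/2$, a net change of $-\pi/2$ modulo the sign pattern of the real and imaginary parts computed earlier. The cleaner alternative I would try first is a homotopy in $\delta$. As $\delta\to0^+$ the zeros of $G$ are those of $-4(z^2+\delta_1)\sqrt{z^2+1}$, none of which has $\Re z>0$. Since no zero crosses $\ii\R$ for any $\delta>0$ (Lemma \ref{imzero1}), and none escapes to infinity because the leading term $-(4+\delta)z^3$ dominates, the count in $\Re z>0$ stays zero.

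For (ii), $\delta_1=0$: I factor $G=-z^2G_1$ and show that $\Re G_1>0$ on $\Re z>0$, using $\Re\sqrt{z^2+1}\ge0$ for the principal branch. Hence there are no zeros in $\Gamma^+$, giving weak stability. The zero at $z=0$, i.e.\ at $(0,\pm1)\in\Gamma$, shows that the Lopatinski\u{\i} condition holds only weakly, not uniformly.
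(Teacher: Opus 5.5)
Your reduction to $G$, part (iii) (the intermediate value theorem on $(0,\sqrt{-\delta_1})$) and part (ii) (the factorization $G=-z^2G_1$ with $\Re G_1>0$ for $\Re z>0$) are the paper's arguments and are fine. The gap is in (i), at the step you flag yourself. You assert that the winding number of $G(\mathcal{C}_R)$ vanishes, but neither justification you offer works.

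First, your bookkeeping on $\ii[0,1]$ is wrong. We have $\Re G(\ii t)=4(t^2-\delta_1)\sqrt{1-t^2}$, $\Im G(\ii t)=\delta t(t^2-\delta_2)$ and $0<\sqrt{\delta_1}<\sqrt{\delta_2}<1$. Hence, as $t$ runs from $0$ to $1$, the point $G(\ii t)$ passes through the third, fourth and first quadrants in that order. It meets the axes only at $t=\sqrt{\delta_1}$ and $t=\sqrt{\delta_2}$. So its continuous argument goes from $\pi$ to $5\pi/2$, a change of $+3\pi/2$, not $-\pi/2$. In the orientation of $\mathcal{C}_R$ (from $\ii$ to $0$) this is $-3\pi/2$. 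It cancels the $+3\pi/2$ gained on the arc, where $\arg G=\pi+3\phi+o(1)$ with endpoints exactly on the negative real and positive imaginary axes. The pieces $[0,R]$ and $[\ii R,\ii]$ contribute nothing. With your value the total would be $2\pi$, i.e.\ one zero in $\Ic_+$, the opposite of what you want. This sign-pattern count is what actually justifies the conclusion that the image curve does not wind around the origin.

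Your preferred homotopy in $\delta$ does not close the gap either. At $\delta=0$ the function $-4(z^2+\delta_1)\sqrt{z^2+1}$ vanishes \emph{on} the boundary $\ii\R$ of the region, at $\pm\ii\sqrt{\delta_1}$ and at the branch points $\pm\ii$. So the number of zeros in $\{\Re z>0\}$ need not be continuous at $\delta=0$. Lemma~\ref{imzero1} plus the control at infinity only shows that this number is constant for $\delta\in(0,\infty)$, not that it is zero. Which side these neutral zeros move to once dissipation is switched on is precisely the content of (i).

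To finish that route you would need a perturbation computation, e.g.\ $z(\delta)=\ii\sqrt{\delta_1}-\delta(\delta_2-\delta_1)/(8\sqrt{1-\delta_1})+O(\delta^2)$, which lies in $\Re z<0$. You would also need a separate check that no zero of the right-half-plane branch of $\sqrt{z^2+1}$ emerges near $\pm\ii$. The direct quadrant count above is shorter and suffices.
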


\section{Discussion}
\label{secdisc}

 A concise summary of the stability results established in Theorems \ref{MaxSta} and \ref{AbeySta} can be stated as follows: for large enough shear modulus $\mu>0$, the phase boundaries $(\bFu,\bm{0},0,\eu)$ exhibit dynamical stability, being weakly stable under the Maxwell criterion and uniformly stable under regular perturbations of Abeyaratne-Knowles type. In the variational context, this is consistent with a recent result which states that, for Hadamard materials with large enough shear modulus, $\mu>0$, the entire jump set becomes stable (see \cite{GrTr19}). Since $(\bFu,\bm{0},0,\eu)$ belongs to the jump set for all $\mu>0$ and $J_0>1$ (see Remark \ref{Eder1}), it is stable for large $\mu$. More precisely, given $J_0>1$, the smallest value of $\mu$ for which (variational) stability of $(\bFu,\bm{0},0,\eu)$ can be guaranteed via the generalized Legendre-Hadamard conditions \cite{GrTr16} is given by 
 \[
 \mu=\mu_{**}:=\frac{2}{3}\Big(1+\frac{1}{\sqrt{3}J_0}\Big);
 \]
 see Appendix \ref{Exa1}. It not hard to verify from \eqref{mu:barr} that
 \[
 \mu_{**}>\mu_*,\quad \text{for all}\; J_0>1.
 \]
 Therefore, Theorems \ref{MaxSta} and \ref{AbeySta} imply that for $\mu>\mu_{**}$, the phase boundaries $(\bFu,\bm{0},0,\eu)$ are not only stable in the variational sense but also dynamically stable. Conversely, in the regime of small $\mu>0$,  dynamical stability is lost, as established in Theorems \ref{MaxSta} and \ref{AbeySta}. This behavior is consistent with its counterpart in the Calculus of Variations. Indeed, in a very recent paper, Grabovsky and Truskinovsky \cite{GrTr24} analyzed the limit $\mu\to0^+$ (liquid limit) and showed that certain parts  of the jump set become unstable. The simultaneous loss of stability (both variational and dynamical) may be explained by the fact that small perturbations of $\bF$ can produce large changes in $J = \det\bF$, allowing for variations that reduce the energy due to the non-convexity of $h(J)$, while the increment in $\mu|\bF|^2$ remains comparatively small. 
  
While a formal connection between variational and dynamical stability has not yet been established, this work shows that for a class of phase boundaries in Hadamard materials, both notions coincide within a specific range of material parameters. A forthcoming paper will aim to determine bounds for dynamical stability in Hadamard materials and compare them with those  previously established for variational stability by Grabovsky and Truskinovsky \cite{GrTr19}. On the other hand, it is obvious that the direct connection between the elliptic Lopatinski\u{\i} condition \cite{Lopa56} (also known as the Lopatinski\u{\i}-Shapiro condition \cite{DHP03,Shpr53} or Complementing Boundary Condition \cite{ADN-II}) and the hyperbolic Kreiss-Majda-Lopatinski\u{\i} condition used here establishes an intimate link between dynamic and static stability of elastic interfaces. This will be detailed in the forthcoming work \cite{FMPV2}.

\section*{Acknowledgements}

\section*{Summary Statement}
\subsection*{Funding declarations}
The work of L. Morales was supported by the Secretar\'{\i}a de Ciencia, Humanidades, Tecnolog\'{\i}a e Innovaci\'{o}n (SECIHTI -- Ministry of Science, Humanities, Technology and Innovation), Mexico, through the Program ``Estancias Post-doctorales por M\'exico 2022''. The work of R. G. Plaza and F. Vallejo was fully supported by SECIHTI, Mexico, grant CF-2023-G-122. 

\subsection*{Conflict of interest} The authors declare no conflict of interest.
\subsection*{Author contributions} The authors contributed to and reviewed all article sections equally.
\subsection*{Ethics declaration} Not applicable.
\subsection*{Data Availability} No datasets were generated or analyzed during the current study.

\appendix

\section{Generalized Legendre-Hadamard condition for phase boundaries}
\label{GrabTro}

For the convenience of the reader, we summarize the results by Grabovsky and Truskinovsky \cite{GrTr16}  that are relevant to this work. Within the framework of nonlinear elasticity, equilibrium configurations  of an elastic solid under prescribed loads and suitable boundary conditions are usually identified with minimizers of the elastic energy functional 
\begin{equation}
\label{elasen1}
\mathcal{E}(\bm{y}) = \int_\Omega W(\nabla \bm{y}(\bm{x}))\, d\bm{x} 
       \;-\; \int_{\partial\Omega} t(\bm{x})\cdot \bm{y}\, dS(\bm{x}),
\end{equation}
where, as before, $\Omega\in\R^d$ is an open bounded domain that represents the reference configuration of the elastic solid, $\bm{y}:\Omega\to\R^{d}$ is the displacement vector of the deformation and $W$, of class $C^2$, is the stored energy density function defined on the space of square matrices with positive determinant (see \cite{GiaHil96vI}). In this context, the Legendre-Hadamard condition plays an essential role, as it constitutes a necessary condition for a smooth single deformation to be a weak local minimizer of $\mathcal{E}$ (cf. Dacorogna \cite[Ch. 5]{Dac08} and Grabovsky \emph{et al.} \cite{GKT11}). However, when the stored energy density function $W$ that characterizes a material is non-convex,  the energy \eqref{elasen1} can be minimized by mixing several deformation gradients that coexist along smooth surfaces. The resulting microstructure often organizes into patterns called laminates and correspond to strong local minima of \eqref{elasen1}. The simplest case are  phase boundaries of the form \eqref{eq:twin} (solid-solid phase transformation) in which the deformation gradient takes two values at both sides of the planar interface with normal $\bn$:
\begin{equation}\label{eq:twin1}
\nabla \bm{y} =
\begin{cases}
\bF^{-}, & \boldsymbol{x}\cdot\bn<0,\\[4pt]
\bF^{+}, & \boldsymbol{x}\cdot\bn>0.
\end{cases}
\end{equation}
A question of fundamental importance is which  properties  the pair $\bFs$ must satisfy in order to be associated with strong local minimizer of \eqref{elasen1} via \eqref{eq:twin1}.  The classical Legendre-Hadamard condition fails to yield meaningful information to characterize any given non-regular deformation as a strong local minimum. A suitable algebraic generalization of the classical Legendre-Hadamard condition for  phase boundaries of the form \eqref{eq:twin1} was developed in \cite{GrTr16}, resulting in necessary conditions for \eqref{eq:twin1} to be a strong local minimizer. They are referred to as generalized Legendre-Hadamard conditions for phase boundaries. We briefly outline  this approach  and  introduce key definitions for clarity (see also \cite{GrTr19b}).

\begin{definition}[the binodal]\label{Bin1}
We say that $W$ is \emph{quasiconvex at $\bF \in \mathbb{R}^{d \times d}$} if
\begin{equation}\label{sstab1}
\int_{D} W\bigl(\bF + \nabla \varphi(\bm{x})\bigr)\,d\bm{x} \;\ge\; |D|\, W(\bF),
\end{equation}
for any domain $D \subset \mathbb{R}^d$ and any $\varphi \in C^{\infty}_0(D;\mathbb{R}^d)$. The set of points $\bF$ where the quasiconvexity fails is called the \emph{binodal region} for $W$ and is denoted by $\mathcal{B}$. The boundary of $\mathcal{B}$, $\partial\mathcal{B}$, is called \emph{the binodal}.
\end{definition}

Quasiconvexity is referred to as strong local stability \cite{GrTr14} (in a variational sense). Indeed,  if $W$ is quasiconvex at $\bF$ then \eqref{sstab1} guarantees that the energy cannot be lowered by any admissible (strong) variation of $\bF$. In turn, as a stability threshold, the binodal characterizes strongly marginally stable states \cite{GrTr13} and plays a central role in distinguishing local minima from saddle points in vectorial variational problems \cite{GrTr19b}. Nevertheless, an explicit characterization of the binodal is difficult in practice because, unlike the Legendre-Hadamard condition, the quasiconvexity condition lacks a pure algebraic characterization. To circumvent this difficulty, the authors in \cite{GrTr16} consider a set of points associated to a simpler algebraic property known as the \emph{Weierstrass convexity condition} which is a consequence of quasiconvexity.  $W$ satisfies the Weierstrass convexity condition at $\bF$ if
\begin{equation}\label{Weconv}
\cEw(\bm{v},\bm{w};\bF):=W(\bF+\bm{v}\otimes\bm{w}) - W(\bF) - \bm{\sigma}(\bF): ( \bm{v}\otimes\bm{w})\geq0,
\quad
\forall \bm{v},\,\bm{w}\in\R^d,
\end{equation}
where $\cEw$ denotes the Weierstrass excess function.  Grabovsky and Truskinovsky \cite{GrTr16} study the boundary points of the set of deformation gradients $\bF$ satisfying \eqref{Weconv}. At a point where $\bF$ crosses this boundary, necessarily there is a pair $(\bm{a},\bn)$ such that $\cEw(\bm{a},\bn;\bF)=0$.  Non trivial zeros of $\cEw$ with $\bm{a}\neq0$ and $|\bn|=1$ are of fundamental importance, as they describe the set of homogeneous deformation gradients $\bF=\bF^-$ that permit energy-neutral nucleation of layers of a new phase with a compatible deformation gradient $\bF^- + \bm{a} \otimes \bn$ \cite{GrTr19,GrTr16}. Observe that when $\bF^{-}$ crosses the boundary of validity of \eqref{Weconv}, while $\cEw(\bm{a},\bn,\bF^{-})=0$, the function $\cEw(\bm{v},\bm{w};\bF^{-})$ is still non-negative and, hence, it attains a minimum at $(\bm{a},\bn)$. Since $W$ is of class $C^2$, it follows that
\[
\nabla_{(\bm{v},\bm{w})}(\bm{a},\bn,\bF^{-})=\Big(\llb \bm{\sigma}(\bF) \rrb\bn,\llb \bm{\sigma}(\bF) \rrb^{\top}\bm{a}\Big)=(\bm{0},\bm{0}).
\]
The vanishing of $\cEw$ and the equations above define the so called \emph{jump set} (cf. \cite{GrTr16,GrTr19b}).

\begin{definition}\label{jumSt}
The jump set $\cJ$ is the closure of the set of gradients $\bFs$ for which there exist $\bm{a}\in\R^{d}\setminus\{0\}$ and a unit vector $\bn\in\R^d$, such that
\begin{equation}
\label{Jump3}
\begin{aligned}
&\llb \bF \rrb = \bm{a} \otimes \bn, \\
&\llb W \rrb - \bm{\sigma}(\bF^{-}): \llb \bF \rrb= 0,\\
&\llb \bm{\sigma}(\bF) \rrb\bn=0,\\
\text{and} \quad &\llb \bm{\sigma}(\bF) \rrb^{\top}\bm{a}=0.
\end{aligned}
\end{equation}
\end{definition}

The first three equations in \eqref{Jump3} constitute classical conditions widely used in the modeling of phase coexistence across a smooth surface \cite{GrTr11}. In general, a necessary condition for $\bFs$ to be associated with a strong local minimizer of \eqref{elasen1} via \eqref{eq:twin1} is that $\bFs$ belongs to the subset of the jump set that lies within the binodal \cite{GrTr19b}. 

Given that \eqref{Weconv} is consequence of quasiconvexity \cite{GrTr16}, and that $\mathcal J$ lies  on the boundary of validity of \eqref{Weconv}, it follows that every neighborhood of each point in $\mathcal J$ contains points where \eqref{Weconv} (and therefore quasiconvexity) fails to hold. Hence, $\cJ$  necessarily lies within the closure of the binodal region ($\mathcal{B}\cup\partial\mathcal{B}$) \cite{GrTr14,GrTr19,GrTr11}. In this context, Grabovsky and Truskinovsky  \cite{GrTr16} aim to determine those points of the jump set $\cJ$ that lie entirely on the binodal by testing quasiconvexity; if quasiconvexity holds there, such points necessarily lie in the binodal (see \cite{GrTr19} for details). In other words, the goal is to identify a subset of the binodal as  a  stable portion of the jump set. To that end, Grabovsky and Truskinovsky  \cite{GrTr16} perform second order expansions of $W$ around $\bFs\in\cJ$ and exploit the fact that both gradients must lie on the binodal. As a result,  generalized Legendre-Hadamard conditions are obtained in the form of inequalities describing the portion of the jump set $\cJ$ that lies in the binodal. 

\section{Generalized Legendre-Hadamard conditions for Hadamard materials}
\label{L-Hgt}

Although the generalized Legendre-Hadamard conditions are quite general and far from trivial,  Grabovsky and Truskinovsky  showed that Hadamard materials of the form \eqref{Hadamardmat0}, with $h$ having the double-well shape (see \cite{GrTr16} for a precise definition), constitutes a wide class of materials for which both the jump set $\cJ$ and these conditions admit explicit expressions. As a result, the corresponding jump set and the generalized Legendre-Hadamard conditions can be formulated in terms of $h$ and the eigenvalues $\theta^{\pm}_1,\cdots,\theta^{\pm}_d$ of the matrix (see \cite{GrTr16} for details)
\[
\widehat{\bm{G}}_{\pm}:=\Cof \big((\bFs)^{\top} \bFs\big).
\]
In what follows, we summarize the main results from \cite{GrTr16}, specialized to the case of Hadamard materials. Equations one, three, and four in \eqref{Jump3} imply that $\bn$ must be a common eigenvector of both $\widehat{\bm{G}}_{-}$ and $\widehat{\bm{G}}_{+}$ associated with the same eigenvalue $\theta^{+}_1=\theta^{-}_1=\theta$. Moreover, these equations also yield the algebraic relation
\begin{equation}\label{preJu1}
\frac{\llb h'\rrb}{\llb J\rrb}=-\frac{\mu}{\theta}.
\end{equation}
On the other hand, the second equation in \eqref{Jump3} can be written as  
 \begin{equation}\label{preJu2}
\llb h\rrb-\langle h'\rangle\llb J\rrb=0,
\end{equation}
which is known as the equal area rule. Due to the double well-shape assumption on $h$, \eqref{preJu2} defines a one to one correspondence between $J^+$ and $J^{-}$. So the jump set reduces to equations \eqref{preJu1}-\eqref{preJu2}.  Taking into account that $J^-$ can be obtained in terms of $\theta^{-}_1,\cdots,\theta^{-}_d$ (see Eq. \eqref{cofdet1}), then \eqref{preJu1}-\eqref{preJu2} represent the jump set $\cJ$ as the surface in the $\theta$-space of eigenvalues of $\widehat{\bm{G}}_{-}$. It can be proved that $\widehat{\bm{G}}_{+}$ and $\widehat{\bm{G}}_{-}$ admit the same eigenvectors, including $\bn$ corresponding to their common eigenvalue $\theta$. The remaining eigenvalues $\theta_{k}^{\pm}$, $k=2,\cdots, d$, obey the relation  
\[
\frac{\theta_{k}^{+}}{(J^+)^2}=\frac{\theta_{k}^{-}}{(J^-)^2},\quad k\geq2.
\]
 
Once the structure of the jump set is established, the generalized Legendre-Hadamard conditions characterize the stable portion of the jump set. They can be written in terms of $h$ and the eigenvalues of $\widehat{\bm{G}}_{\pm}$ as follows (see \cite{GrTr16} for details):

\begin{enumerate}
\item \textit{Stability with one phase fixed}: If $J^+>J^{-}$ and $\theta'_{\pm}$ is the second largest eigenvalue of $\hat{G}_{\pm}$ then it must be satisfied that
\begin{equation}\label{S1}
\tag{S$_1$} 
\theta'_{+}\leq\theta, \qquad \theta'_{-}\leq\theta\left(\frac{J^-}{J^+}\right)^2.
\end{equation}
\item \textit{Stability of laminates}:
\begin{equation}\label{S2}
\tag{S$_2$} 
\frac{\mu}{\theta'_{\pm}}\geq\frac{\bigl(h''(J^{\pm}) + \mu\theta^{-1} (J^{\mp}/J^{\pm})\bigr)^2}{\mu\theta^{-1} + h''(J^\pm)}
-  h''(J^{\pm}).
\end{equation}
\end{enumerate}

Conditions \eqref{S1} and \eqref{S2}  determine the portion of the jump set that lies in the binodal. In the paper \cite{GrTr19}, the authors prove that for $\mu$ large enough, the jump set becomes stable, which implies that this set delivers the entire binodal.  

\begin{remark}\label{ReduLG}
Note that if $h''(J^{\pm})=0$, \eqref{S2} reduces to
\[
\theta'_{+}\leq\theta\left(\frac{J^+}{J^-}\right)^2, \qquad \theta'_{-}\leq\theta\left(\frac{J^-}{J^+}\right)^2,
\]
which is clearly a consequence of  \eqref{S1}, because $J^{+}>J^{-}$ implies the first equation above. Indeed, the first equation in \eqref{S1} gives
\[
\theta'_{+}\leq\theta<\theta\left(\frac{J^+}{J^-}\right)^2.
\]
Therefore when $h''(J^{\pm})=0$, the generalized Legendre-Hadamard conditions \eqref{S1}, \eqref{S2} reduce to \eqref{S1}. 
\end{remark}

\begin{remark}
Grabovsky and Truskinovsky \cite{GrTr16} actually derived explicit expressions for the jump set and for the generalized Legendre-Hadamard conditions in the special case of Hadamard materials \eqref{Hadamardmat0} with $\mu=1$ (see \cite{GrTr16}, Eqs. (5.6)-(5.7) and (5.10)-(5.11)). However, it is to be observed that the properties defining both the jump set and the generalized Legendre-Hadamard conditions, \eqref{Weconv} and \eqref{sstab1}, respectively, are invariant under multiplications by positive scalars. Therefore,  both the jump set and the stability conditions for $W$ given in \eqref{Hadamardmat0} with arbitrary $\mu$ can be obtained from those corresponding to $W$ with $\mu=1$, through the rescaling $h\to (1/\mu)h$.
\end{remark}

\subsection{An example} 
\label{Exa1}
Let us verify that, under the assumption
\begin{equation}\label{cons3}
\mu>\frac{2}{3}\Big(1+\frac{1}{\sqrt{3}J_0}\Big),
\end{equation}
the phase boundaries $(\bFu,\bm{0},0,\eu)$ lie in the stable portion of jump set for Hadamard materials \eqref{Hadamardmat0} with $h$ given by \eqref{hdef1}. Notice that the second inequality in \eqref{cons3} is trivially satisfied for large values of $\mu$.  We have to verify equations \eqref{preJu1}, \eqref{preJu2} of the jump set as well as conditions \eqref{S1} and \eqref{S2} to evaluate stability. Let us first compute the eigenvalues of $\widehat{\bm{G}}_{\pm}$. Straightforward computations yield
\[
\begin{aligned}
\widehat{\bm{G}}_{\pm}=\Cof \big((\bFu)^{\top} \bFu\big)
=\begin{pmatrix}
 \theta_0 & 0\vspace{.3cm}\\
0 & \big(J_0\pm\frac{1}{\sqrt{3}}\big)^2\frac{1}{\theta_0}
\end{pmatrix}.
\end{aligned}
\]
The eigenvalues of $\widehat{\bm{G}}_{\pm}$ are $\theta_{1}^{\pm}=\theta_0$ associated to the common eigenvector $\bn=\eu$ and $\theta_{2}^{\pm}=\big(J_0\pm\frac{1}{\sqrt{3}}\big)^2\frac{1}{\theta_0}$. It is easy to see from \eqref{cons3} that $3\mu J_0/2>J_0+(1/\sqrt{3})$. Therefore
\begin{equation}\label{ineq5}
\theta_0^2=\Big(\frac{3\mu J_0}{2}\Big)^2>\Big(J_0+\frac{1}{\sqrt{3}}\Big)^2>\Big(J_0-\frac{1}{\sqrt{3}}\Big)^2>0,
\end{equation}
in view that $J_0>1$. \eqref{ineq5}  implies that $\theta=\theta_0$ is the largest eigenvalue of both $\widehat{\bm{G}}_{\pm}$ and $\theta'_{\pm}=\theta_{2}^{\pm}$ are the second largest eigenvalues, respectively.  Upon direct substitution of \eqref{hdef1}, Eq. \eqref{preJu2} becomes 
\[
J^+=2J_0-J^{-},
\]
which gives the one to one correspondence between $J^{\pm}$ and is trivially satisfied by $\bFu$ (see \eqref{detpos}). On the other hand, direct substitution of \eqref{hdef1}, \eqref{teta0}, \eqref{detpos} and \eqref{ache0} into \eqref{preJu1} yields
\[
\frac{\llb h'\rrb}{\llb J\rrb}=\frac{2h_0}{2/\sqrt{3}}=\frac{-2\mu}{3J_0\mu}=-\frac{\mu}{\theta_0}.
\]
That is, $(\bFu,\bm{0},0,\eu)$ also satisfies \eqref{preJu1}, and then it lies in the jump set. Now, let us check that   the generalized conditions of Legendre-Hadamard type, \eqref{S1} and \eqref{S2}, are also satisfied. Indeed, since $h''(J^{\pm})=0$, it follows from Remark \ref{ReduLG} that both \eqref{S1} and  \eqref{S2} reduce to  \eqref{S1}; hence, it suffices to verify property \eqref{S1} only. Direct computations yield
\[
\frac{\theta'_{+}}{\theta}=\frac{\theta'_{-}(J^{+})^2}{\theta(J^{-})^2}=\left(\dfrac{J_0+\tfrac{1}{\sqrt{3}}}{\theta_0}\right)^2\leq1,
\]
inasmuch as \eqref{ineq5} holds. Thus, \eqref{S1} and \eqref{S2} are satisfied and, consequently, $\bFu$ lies on the portion of the jump set that is contained in the binodal. In summary, for $h$ given by  \eqref{hdef1}, the phase boundary $(\bFu,\bm{0},0,\eu)$ lies within the stable part of the jump set whenever \eqref{cons3} holds.

\begin{remark}
Note that for $\mu=1$, the second assumption in \eqref{cons3} becomes $J_0>2/\sqrt{3}$. This is a fundamental assumption for the stability conditions \eqref{S1} and \eqref{S2} to capture a stable portion of the jump set in the case $\mu=1$ and $h$ given by  \eqref{hdef1} (see \cite{GrTr16}, \S 5.5).
\end{remark}

%
%
%
%
%

\def\cprime{$'\!\!$}




%

\end{document}